\newtheorem{theorem}{\bf Theorem}[section]
\newtheorem{lemma}[theorem]{\bf Lemma}
\newtheorem{cor}[theorem]{\bf Corollary}
\newtheorem{problem}[theorem]{\bf Problem}
\newtheorem{prop}[theorem]{\bf Proposition}
\newtheorem{conj}[theorem]{\bf Conjecture}
\newtheorem{construction}[theorem]{\bf Construction}
\newtheorem{nota}[theorem]{\bf Notation}
\newtheorem{remark}[theorem]{\bf Remark}
\newtheorem{defi}[theorem]{\bf Definition}
\newtheorem{result}[theorem]{\bf Result}
\newcommand{\cut}[1]{}
\def\ex{\hbox{\rm ex}}
\title{Supersaturation of $C_4$: from Zarankiewicz towards Erdős-Simonovits-Sidorenko}
\author{
  {\bf Zolt\'an L\'or\'ant Nagy}\thanks{The author is supported by the Hungarian Research Grant (OTKA) No. K. 120154 and by the J\'anos Bolyai Research Scholarship of the Hungarian Academy of Sciences}\\ \\ 
{\small MTA--ELTE Geometric and Algebraic Combinatorics Research Group}\\
{\small ELTE E\"otv\"os Lor\'and University, Budapest, Hungary}\\
{\small Department of Computer Science}\\
{\small H--1117 Budapest, P\'azm\'any P.\ s\'et\'any 1/C, Hungary}\\
{\small \tt{  nagyzoli@cs.elte.hu}}}
\date{}
\begin{document}
\maketitle

\begin{abstract}

 For a positive integer $n$, a graph $F$ and a bipartite graph  \ $G\subseteq K_{n,n}$   let ${F(n+n, G)}$ denote the number of copies of $F$  in $G$, and let 
$F(n+n, m)$ denote the minimum number of copies of $F$ in all graphs $G\subseteq K_{n,n}$ with $m$ edges. The study of such a function is the subject of theorems of supersaturated graphs and closely related to the Sidorenko-Erdős-Simonovits conjecture as well.
In the present paper we investigate the case when $F= K_{2,t}$ and in particular the quadrilateral graph case. For $F=C_4$, we obtain exact results if $m$ and the corresponding Zarankiewicz number differ by at most $n$, by a finite geometric construction of almost difference sets. $F= K_{2,t}$ if  $m$ and the corresponding Zarankiewicz number differs by  $Cn\sqrt{n}$ we prove asymptotically sharp results. We also study stability questions and point out the connections to covering and packing block designs.

  \bigskip\noindent \textbf{Keywords}: extremal graphs, supersaturation, quadrilateral, almost difference sets, designs, finite geometry 
\end{abstract}

\section{Introduction}

To determine the minimal number of a  subgraph $F$ in certain graph families is one of the oldest problems in combinatorics. In fact, it dates back to the  results of Mantel and Tur\'an \cite{survey}, who  investigated  the case of cliques ($F=K_p$) to describe the maximal number of edges of $n$-vertex graphs for which the   minimal number of subgraphs $F$ can still be zero. These results initiated the study of the function $$\ex(n, F)=\max\{|E(G)|: |V(G)|=n, F\not\subseteq G\},$$ and the so called \textit{extremal graphs} for which the number of edges meet the extremal function;  furthermore in general, extremal graph theory.

  Let us denote by $F(n,G)$ the number of $F$-subgraphs in the $n$-vertex graph $G$, and denote by $F(n,m)$ the minimal number of $F$-subgraphs in $n$-vertex graphs having $m$ edges. Clearly, $F(n,m)=0$ for $m=\ex(n, F)$, while $F(n,\ex(n, F)+1)>0$ holds. Following the pioneer unpublished result of Rademacher, Erd\H{o}s and Simonovits \cite{ES83} started to investigate the order of magnitude of the function $F(n,m): m>\ex(n, F)$ for arbitrary $F$  and called theorems which asserts that a graph $G=G_n$ contains very many graphs $F$ from a family $\mathcal{F}$  \textit{ theorems on supersaturated graphs}. Such
theorems are not only interesting in themselves, but also are often useful in establishing other extremal results.

If the number of edges have form $m=	\ex(n, F)+k$, then we call $k$ the \textit{excess} (over the extremal number). 

In the case $F=K_{p+1}$,  Erd\H{o}s \cite{E62} proved a stability result when the excess is small.

\begin{theorem}
For every fixed $p$ there exists a constant $c_p>0$ such that in any graph $G$ on $n$ vertices and $m= e(T_{n,p})+k$ edges (where $k<c_pn$) contains as many $K_{p+1}$ as the graph $T_{n,p}^{(+k)}$, where $T_{n,p}^{(+k)}$ is obtained from the Tur\'an graph $T_{n,p}$ by putting $k$ edges in one of its maximal classes so that the new edges form no triangle.
\end{theorem}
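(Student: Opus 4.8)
The plan is to compute exactly how many copies of $K_{p+1}$ sit in $T_{n,p}^{(+k)}$ and then prove a matching lower bound for an arbitrary $G$ by a stability argument followed by a local count. Label the classes of $T_{n,p}$ as $V_1,\dots,V_p$ with $|V_1|\ge\cdots\ge|V_p|$ (each of size $\lfloor n/p\rfloor$ or $\lceil n/p\rceil$) and add the $k$ triangle-free edges inside the largest class $V_1$; this is possible because $k<c_pn$ is well below $\lfloor|V_1|^2/4\rfloor$. Any $K_{p+1}$ has $p+1$ vertices spread over $p$ classes, so two of them lie in one class; since the new edges all lie in $V_1$ and span no triangle, every copy consists of one new edge of $V_1$ together with one vertex from each of $V_2,\dots,V_p$, and different new edges yield different copies. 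Hence $T_{n,p}^{(+k)}$ has exactly $kD$ copies, where $D:=|V_2|\cdots|V_p|$ is the product of the $p-1$ smallest class sizes (choosing $V_1$ to be a largest class makes $D$, hence the count, minimal). So it remains to show $K_{p+1}(n,G)\ge kD$ for every $G\subseteq K_n$ with $e(G)=e(T_{n,p})+k$ and $1\le k<c_pn$.

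\textbf{Reduction to a near-extremal structure.}
Fix such a $G$; we may assume $K_{p+1}(n,G)<kD$, as otherwise there is nothing to prove. Take a partition $V(G)=W_1\cup\cdots\cup W_p$ with as many crossing edges as possible, and let $a$ be the number of internal edges and $b$ the number of non-adjacent crossing pairs. Counting edges,
\[
a-b=e(G)-\sum_{i<j}|W_i||W_j|=k+\Delta,\qquad \Delta:=\tfrac12\Bigl(\sum_i|W_i|^2-\sum_i|V_i|^2\Bigr)\ge0,
\]
so $a\ge k$ and $a-k=b+\Delta$. Because $G$ is within $k=o(n^2)$ edges of $\ex(n,K_{p+1})=e(T_{n,p})$ and, by the assumed bound, has relatively few copies of $K_{p+1}$, a sharp form of the Erd\H{o}s--Simonovits stability theorem shows that this partition is nearly balanced, $\max_i\bigl|\,|W_i|-n/p\,\bigr|=o(n)$, and that the largest internal degree is $o(n)$: a vertex lying on $\gg k$ internal edges would extend to $\gtrsim kD$ copies of $K_{p+1}$, contradicting our assumption, and the same consideration lets us assume below that the maximum internal degree is at most, say, $\tfrac14\lfloor n/p\rfloor$.

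\textbf{The count.}
Every copy of $K_{p+1}$ uses at least one internal edge, so
\[
K_{p+1}(n,G)\ \ge\ \sum_{uv\ \text{internal}} N(uv),\qquad N(uv):=\#\{\text{copies of }K_{p+1}\text{ whose only internal edge is }uv\}.
\]
For $uv\subseteq W_i$, the number $N(uv)$ counts transversals $(x_j)_{j\ne i}$ with $x_j\in W_j$, all pairs adjacent, and each $x_j$ adjacent to both $u$ and $v$; this is $\prod_{j\ne i}|W_j|$ minus at most $\bigl(\deg_{\mathrm{miss}}(u)+\deg_{\mathrm{miss}}(v)\bigr)\cdot\prod_{j\ne i}|W_j|/\min_j|W_j|$ plus a lower-order correction from missing adjacencies among the $x_j$, where $\deg_{\mathrm{miss}}(w)$ is the number of crossing non-edges at $w$. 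Since $\sum_{uv\ \text{internal}}\bigl(\deg_{\mathrm{miss}}(u)+\deg_{\mathrm{miss}}(v)\bigr)\le 2b\cdot\max_w\deg_{\mathrm{int}}(w)\le\tfrac12\lfloor n/p\rfloor\cdot b$, summing gives
\[
K_{p+1}(n,G)\ \ge\ a\,D'-\tfrac12\,b\,D'-o(bD')\ \ge\ D'\bigl(k+\Delta+\tfrac13 b\bigr),\qquad D':=\min_i\prod_{j\ne i}|W_j|,
\]
with $D'=D\bigl(1-o(1)\bigr)$ by the near-balance. To finish one needs $D'(k+\Delta+\tfrac13 b)\ge kD$, i.e.\ $D'(\Delta+\tfrac13 b)\ge k(D-D')$; writing $d:=\max_i\bigl|\,|W_i|-n/p\,\bigr|$, one has $D-D'=O_p(d\,n^{p-2})$, $\Delta\ge c_p d^2$ and $D'\ge c_p n^{p-1}$, so it reduces to $nd\ge C_p k$, which holds since $d\ge1$ and $k<c_pn$. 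Equality forces $b=\Delta=0$ and all $k$ internal edges inside one largest class with no triangle, i.e.\ $G=T_{n,p}^{(+k)}$.

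\textbf{Main obstacle.}
Two points need real work. First, the reduction step must be made quantitative in the right sense: one needs the number of missing crossing edges and the imbalance controlled well enough that the error in the count above is genuinely dominated; this is a supersaturation-stability statement (few copies of $K_{p+1}$ force $G$ close to $T_{n,p}$) and fixing its dependence on $k$ is what pins down the admissible $c_p$. Second, and more delicate, the count pits the ``profit'' $(\Delta+\tfrac13 b)D'$ from imbalance and missing edges against the ``cost'' $k(D-D')$ plus error terms, both of the same order in their leading parameters; establishing that the profit always wins, with equality exactly at $T_{n,p}^{(+k)}$, is an exact optimisation over all near-extremal configurations and constitutes the core of the argument. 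The cleanest packaging is an induction on $k$: if some edge of $G$ lies in at least $D$ copies of $K_{p+1}$, delete it and apply the inductive hypothesis to the excess-$(k-1)$ graph; otherwise $b+\Delta>0$ and the surplus internal structure already accounts for $D$ further copies — but verifying this dichotomy uniformly (and handling small or degenerate cases, where the clique count must be checked by hand) is where the technical burden lies.
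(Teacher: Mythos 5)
The paper does not prove this statement at all: it is Erd\H{o}s's 1962 theorem, quoted as background (Theorem 1.1, reference \cite{E62}), so there is no in-paper argument to compare yours against and your proposal has to stand on its own. As it stands it is a plausible strategy outline, not a proof, and the gaps are exactly where you place them, but they are substantive rather than routine. The most concrete problem is the reduction step: you discard the case of a vertex of large internal degree by asserting that a vertex lying on $\gg k$ internal edges ``would extend to $\gtrsim kD$ copies of $K_{p+1}$.'' That is not justified at this point of the argument — such a vertex may simultaneously be missing a linear number of crossing edges, in which case its internal edges extend to few or no copies. The whole difficulty of Erd\H{o}s's theorem is precisely the trade-off between internal edges and missing crossing edges, so assuming the internal degrees are $O(n/p)$ with only that one-line justification begs the question. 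Without that bound your estimate $\sum_{uv}(\deg_{\mathrm{miss}}(u)+\deg_{\mathrm{miss}}(v))\le \tfrac12\lfloor n/p\rfloor b$ collapses, and with it the inequality $K_{p+1}(n,G)\ge D'\bigl(k+\Delta+\tfrac13 b\bigr)$.

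The second genuine gap is that the theorem is exact, with no slack: you must produce at least $kD$ copies, while your count carries uncontrolled $o(\cdot)$ error terms (the ``lower-order correction from missing adjacencies among the $x_j$,'' the passage $D'=D(1-o(1))$, and the unquantified stability statement giving near-balance and $b=o(n^2)$ from the mere assumption of fewer than $kD$ copies). The decisive comparison $D'(\Delta+\tfrac13 b)\ge k(D-D')$ is between quantities of the same order when $d$ and $b$ are small (say $d\in\{1,2\}$, $b=O(k)$), which is exactly the regime where the equality case lives; an asymptotic bookkeeping of the kind you sketch cannot by itself decide it, and your closing induction (``delete an edge in $\ge D$ copies, else $b+\Delta>0$ supplies $D$ more copies'') states the needed dichotomy without proving either horn. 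So the proposal identifies the right ingredients — max-cut partition, counting transversals through internal edges, a stability input, induction on the excess — but the two steps you flag as ``real work'' are the theorem, and they remain open in your write-up.
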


The following stability theorem became a  milestone in the study of supersaturated graphs.

\begin{theorem}[Lovász-Simonovits, \cite{LS75}]\label{LS}
	Let $C$ be an arbitrary constant
and let  $p$ be a fixed integer. There exists two  constants $\delta>0$ and $C'$ such that in any graph $G$ on $n$ vertices and $m= e(T_{n,p})+k$ edges ( $0<k<\delta\cdot n^2$), if the number of copies of $K_{p+1}$ in $G$ is at most $Ckn^{p-1}$ then $G$ can be obtained from the Turán graph  $T_{n,p}$ by adding or deleting only at most $C'k$ edges.
\end{theorem}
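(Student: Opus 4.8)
The plan is a ``rough, then fine'' argument in two stages. (One could instead follow Lov\'asz and Simonovits' own method of \emph{progressive induction}: induct on $n$ with a potential combining the number of copies of $K_{p+1}$ and a penalty for the distance to $T_{n,p}$; below I sketch the stability route.) \emph{Stage~1 (rough structure).} Since $e(G)=e(T_{n,p})+k\ge e(T_{n,p})$ while $G$ has, by hypothesis, at most $Ckn^{p-1}\le C\delta\,n^{p+1}$ copies of $K_{p+1}$, choosing $\delta$ small enough in terms of $C$ and $p$ lets us invoke the classical Erd\H{o}s--Simonovits stability theorem for Tur\'an's problem: there is a partition $V(G)=V_1\cup\dots\cup V_p$ with at most $\varepsilon_1 n^2$ edges inside the parts, where $\varepsilon_1\to 0$ as $\delta\to 0$. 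Fix such a partition minimising the number of within-part edges; since moving a single vertex from one part to another cannot decrease this number, we get $\deg_{V_i}(v)\le\tfrac1p\deg(v)\le\tfrac np$ for every $v\in V_i$. Comparing edge counts, $e(T_{n,p})\le e(G)\le\sum_{i<j}|V_i||V_j|+\varepsilon_1 n^2$, and as this last sum is maximised at equal part sizes with a quadratic penalty for imbalance, each part has $\tfrac np\pm O_p(\sqrt{\varepsilon_1}\,n)$ vertices; after relabelling to a fixed balanced $p$-partition, $|E(G)\triangle E(T_{n,p})|=O_p(\sqrt{\varepsilon_1}\,n^2)$. The whole content of the theorem is to improve this to $O_p(k)$.

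\emph{Stage~2 (charging the defects).} Let $a$ be the number of within-part edges and $b$ the number of missing cross-pairs; counting edges gives $a-b=k+\Delta$, where $\Delta=e(T_{n,p})-\sum_{i<j}|V_i||V_j|\ge 0$ measures the imbalance, so in particular $a\ge k$. Call $v$ \emph{good} if it is non-adjacent to at most $\eta n$ vertices in each part other than its own; since the total cross-deficiency is $2b$, at most $2b/(\eta n)$ vertices are bad. A within-part edge $uv\subseteq V_i$ with $u,v$ both good lies in many copies of $K_{p+1}$: the common neighbourhood of $u$ and $v$ in each other part has at least $\tfrac{n}{2p}$ vertices once $\eta,\delta$ are small, and picking one vertex from each of these $p-1$ sets so that the chosen $p-1$ vertices are pairwise adjacent (the tuples failing this because of a missing cross-pair number at most $O_p(b\,n^{p-3})=o(n^{p-1})$) produces at least $c_p\,n^{p-1}$ copies through $uv$, for some constant $c_p>0$ depending only on $p$. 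Since each copy of $K_{p+1}$ contains at most $\lfloor(p+1)/2\rfloor$ within-part edges, a double count yields
\[
\#\{\text{within-part edges with both endpoints good}\}\ \le\ \frac{\lfloor(p+1)/2\rfloor}{c_p}\,Ck\ =\ O_p(k).
\]
Every remaining within-part edge meets a bad vertex, so there are at most $(2b/(\eta n))\cdot n=2b/\eta$ of those; hence $a\le O_p(k)+2b/\eta$.

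The genuine difficulty, and where I expect essentially all of the effort to go, is to bound $b$ and $\Delta$ \emph{a priori} --- equivalently, to show that the parts are almost complete to one another and almost perfectly balanced, up to $O_p(k)$ edges overall --- because a missing cross-pair lies, on its own, in no copy of $K_{p+1}$ (a complete but unbalanced $p$-partite graph has none at all), so the $K_{p+1}$-count gives no direct leverage against it. The only available lever is that the surplus $a\ge k$ worth of within-part edges must sit somewhere, together with the minimality of the partition: a vertex of large cross-deficiency, reassigned to the part where it has fewest neighbours, either strictly decreases the number of within-part edges --- contradicting minimality --- or, in combination with the surplus edges, creates many copies of $K_{p+1}$. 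I would turn this into an iteration: delete the $O(b/(\eta n))$ bad vertices together with the $O(b/\eta)$ within-part edges at them, bookkeep the resulting changes in $n$, $k$, $b$ and $\Delta$, re-run Stage~2 on the residue, and continue until the surviving defects --- the imbalance included --- are small enough that $|E(G)\triangle E(T_{n,p})|=O_p(k)$. Making this clean-up genuinely close, and checking that this is exactly where the hypothesis $k<\delta n^2$ and the dependence of $\delta$ on $C$ and $p$ are needed, is the delicate heart of the matter; everything preceding it is soft.
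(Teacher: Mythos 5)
This is Theorem \ref{LS} as quoted from Lov\'asz and Simonovits; the paper itself gives no proof (it is a cited result), so there is nothing internal to compare against, and your sketch has to stand on its own. It does not: you have correctly located, and candidly admitted, the missing step, but that step is not a routine clean-up --- it is the theorem. Your Stage~2 charging argument only controls within-part edges whose endpoints are \emph{good}: writing $a$ for within-part edges, $b$ for missing cross-pairs and $\Delta\ge 0$ for the imbalance defect, you obtain $a\le O_p(k)+2b/\eta$, while edge counting gives $a-b=k+\Delta$. Combining these yields only $b\bigl(2/\eta-1\bigr)\ge k+\Delta-O_p(k)$, which is a \emph{lower}-type constraint and gives no upper bound on $b$ or $\Delta$ in terms of $k$; since the desired edit distance is essentially $a+b$ plus the imbalance correction, the conclusion $|E(G)\triangle E(T_{n,p})|\le C'k$ simply does not follow from what you have. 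The obstruction is real, as you note: a missing cross-pair creates no copy of $K_{p+1}$ by itself, so the $K_{p+1}$-count cannot be charged against $b$ locally, and the proposed iteration (delete bad vertices, re-run) is not shown to terminate with all defects reduced to $O_p(k)$; each round discards within-part edges and changes $k$, and you give no potential function or bookkeeping that prevents the deleted quantities from accumulating to $\omega(k)$.

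Two further remarks. First, the Erd\H{o}s--Simonovits stability input in Stage~1 only ever gives $o(n^2)$ closeness, and no amount of soft post-processing upgrades $o(n^2)$ to $O(k)$ when $k$ may be as small as $1$; the actual Lov\'asz--Simonovits argument avoids this by working with a progressive induction on $n$ (a potential coupling the $K_{p+1}$-count with the distance to $T_{n,p}$, together with symmetrization-type vertex comparisons), which is the route you mention parenthetically but do not carry out --- for this theorem that alternative is not optional, it is where the $O(k)$ bound comes from. Second, a small quantitative slip: a copy of $K_{p+1}$ spread over $p$ parts can contain up to $\binom{p+1}{2}$ within-part edges (e.g.\ several vertices in one part), not $\lfloor (p+1)/2\rfloor$; this only perturbs constants in your double count, but it should be stated correctly. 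As it stands the proposal is an outline of a plausible attack whose decisive step --- bounding $b$ and $\Delta$ by $O_p(k)$ --- is missing.
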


In general, the cliques can be replaced by any graph $F$ in the problem. It turns out that the analogue of Theorem \ref{LS} holds for every graph $F$ which has chromatic number $p+1>2$, due to  Erdős and Simonovits \cite{ES83, survey}. The result is based on the application of the celebrated Erdős--Stone--Simonovits theorem.\\
However if  $F$ is bipartite, the situation is significantly different, since the extremal numbers of bipartite graphs have exponents smaller than $2$.

Concerning the $F$ bipartite case, Erdős and Simonovits also proved 

\begin{theorem}[Erdős-Simonovits, on the number of complete bipartite graphs \cite{ES83, survey}]\label{Kab}	\label{es}
  For any $a\leq b$,   there exist two constants $c, \gamma > 0 $ such that if an $n$-vertex  graph $G$ has  $m>cn^{2-\frac{1}{a}}$ edges,
then $G$ contains at least $$ \frac{\gamma\cdot m^{ab}}{n^{2ab-a-b}}$$
 copies of $F=K_{a,b}$.
\end{theorem}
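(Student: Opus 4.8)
The plan is to prove Theorem~\ref{es} by a double-counting (Kővári–Sós–Turán-style) argument, exactly in the spirit of the proof that $\ex(n,K_{a,b})=O(n^{2-1/a})$, but carried out quantitatively rather than just to derive a threshold. The basic object is the count of ``cherries'' (stars $K_{1,a}$) in $G$: for each vertex $v$ of degree $d(v)$ there are $\binom{d(v)}{a}$ $a$-subsets of its neighbourhood. Summing over $v$ and using convexity of $x\mapsto\binom{x}{a}$ together with $\sum_v d(v)=2m$, one gets a lower bound of order $n\binom{2m/n}{a}\sim \frac{(2m)^a}{a!\,n^{a-1}}$ on the number of pairs $(v,S)$ with $v$ a vertex and $S$ an $a$-subset of $N(v)$. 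This quantity equals $\sum_{S}\,\mathrm{codeg}(S)$, where the sum is over $a$-subsets $S$ of $V(G)$ and $\mathrm{codeg}(S)=|\{v: S\subseteq N(v)\}|$ is the common-neighbourhood size.

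**Next I would** run a second convexity step, this time over the $a$-subsets $S$. The number of copies of $K_{a,b}$ (up to the usual labelling constants) is, again using convexity of $x\mapsto\binom{x}{b}$,
\[
  K_{a,b}(n,G)\ \gtrsim\ \sum_{S}\binom{\mathrm{codeg}(S)}{b}\ \geq\ \binom{n}{a}\binom{\bar{c}}{b},
\]
where $\bar c = \left(\sum_S \mathrm{codeg}(S)\right)\big/\binom{n}{a}$ is the average codegree. From the first step $\bar c \gtrsim \frac{(2m)^a}{a!\,n^{a-1}}\big/\binom{n}{a}\sim (2m/n)^a \cdot n^{-a+\text{(lower order)}}$; more cleanly, $\bar c$ is of order $m^a/n^{2a-1}$ up to constants. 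The hypothesis $m>cn^{2-1/a}$ is exactly what guarantees $\bar c$ is bounded below by a large constant (indeed $\bar c \geq b$ once $c$ is chosen large enough), so that $\binom{\bar c}{b}\geq \gamma' \bar c^{\,b}$ for an absolute $\gamma'>0$. Substituting, $K_{a,b}(n,G)\gtrsim \binom{n}{a}\cdot\left(\frac{m^a}{n^{2a-1}}\right)^b \sim \gamma\,\frac{m^{ab}}{n^{2ab-a-b}}$, which is the claimed bound after absorbing all factorial and binomial constants into $\gamma$.

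**The point requiring care** is the bookkeeping that turns the two Jensen/convexity inequalities into the clean exponent $2ab-a-b$, and in particular making sure the threshold $m>cn^{2-1/a}$ does double duty: it must be strong enough (with $c$ large) that (i) the average degree $2m/n$ exceeds $a$ so the first convexity bound is not vacuous, and (ii) the average codegree $\bar c$ exceeds $b$ so that $\binom{\bar c}{b}$ can be replaced by a constant times $\bar c^b$. One should also note that convexity of $\binom{x}{b}$ is only valid for $x\geq b-1$; the standard fix is to work with the convex extension $\binom{x}{b}:=\frac{x(x-1)\cdots(x-b+1)}{b!}$ for real $x$ (or to discard the $S$ with small codegree, which contribute a negligible fraction once $\bar c$ is large), and this is where a little attention is needed so the constants stay positive. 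The exponent of $n$ is then just arithmetic: $\binom{n}{a}$ contributes $n^a$, the term $(m^a/n^{2a-1})^b$ contributes $m^{ab}n^{-b(2a-1)}$, and $a - b(2a-1) = a - 2ab + b = -(2ab-a-b)$, matching the statement.

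**I expect the main obstacle** to be purely presentational rather than conceptual: the argument is two applications of Jensen's inequality, but writing it so that every constant is explicit and the role of the hypothesis $m>cn^{2-1/a}$ is transparent — especially handling the regime where some codegrees are below $b$ — takes some care. There is no deep idea beyond Kővári–Sós–Turán; the novelty (and the reason Erdős and Simonovits record it) is simply that the same counting, pushed one convexity step further, yields a \emph{polynomial} lower bound on the number of copies rather than merely the existence of one copy.
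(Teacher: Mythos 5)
Your proposal is correct: the paper does not reprove this cited theorem, but its own Theorem~\ref{theoLB} establishes the analogous bound by exactly your method — two applications of Jensen's inequality (first over vertex degrees to lower-bound the total codegree of $a$-sets, then over $a$-sets), using the truncated convex extension of the binomial, with the threshold $m>cn^{2-1/a}$ ensuring the average codegree is large enough that $\binom{\bar c}{b}\gtrsim \bar c^{\,b}$. Your exponent bookkeeping ($a-b(2a-1)=-(2ab-a-b)$) and the treatment of small codegrees are sound, so this is essentially the same Kővári--Sós--Turán-style argument the paper relies on.
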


\begin{defi}
We call $  c, \gamma= \gamma(c) > 0 $ the {\em density parameters} for $K_{a,b}$.
\end{defi}

\begin{cor}[\cite{survey}]\label{Erdo}
 Let $c > 0$ be an arbitrary constant and  $G$ be an $n$-vertex  with  $m$ edges. If $m > c\cdot \ex(n, C_4)$, then $G$
contains at least $\gamma m^4/n^4$
copies of $C_4$, for some $\gamma > 0$. 
\end{cor}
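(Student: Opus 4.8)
The plan is to obtain Corollary~\ref{Erdo} as the special case $a=b=2$ of Theorem~\ref{es}. Since $C_4$ and $K_{2,2}$ are the same graph, a subgraph copy of $C_4$ is literally a copy of $K_{2,2}$, so the only thing to do is re-express the hypothesis $m>c\cdot\ex(n,C_4)$ in the form $m>c'n^{3/2}$ required there. For this I would invoke the classical evaluation of the corresponding Zarankiewicz number: the K\H{o}v\'ari--S\'os--Tur\'an--Reiman bound gives $\ex(n,C_4)\le\tfrac14\bigl(1+\sqrt{4n-3}\bigr)n$, while the Erd\H{o}s--R\'enyi / Brown polarity graphs provide a matching lower bound, so that $\ex(n,C_4)=\bigl(\tfrac12+o(1)\bigr)n^{3/2}$; in particular $a_1 n^{3/2}\le\ex(n,C_4)\le a_2 n^{3/2}$ for some absolute $0<a_1<a_2$ and all large $n$.

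Now let $c_0=c_0(2,2)$ and $\gamma_0=\gamma(c_0)$ be the density parameters for $K_{2,2}$ given by Theorem~\ref{es}, and let $c$ be the constant of the corollary (which should be understood as $c>1$: for $c<1$ a near-extremal $C_4$-free graph already satisfies $m>c\cdot\ex(n,C_4)$ with no $C_4$ at all). If $c\ge c_0/a_1$, then for large $n$ the hypothesis gives $m>c\cdot\ex(n,C_4)\ge c\,a_1 n^{3/2}\ge c_0 n^{3/2}$, and Theorem~\ref{es} directly produces at least $\gamma_0 m^4/n^4$ copies of $C_4$, so $\gamma=\gamma_0$ works. For $c$ in the remaining bounded interval $1<c<c_0/a_1$ I would instead argue by convexity --- an argument which in fact proves the statement for every $c>1$ without reference to Theorem~\ref{es}. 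Writing $d_1,\dots,d_n$ for the degrees of $G$, one has two identities: the number of copies of $C_4$ equals $\sum_{\{u,v\}}\binom{\mathrm{codeg}(u,v)}{2}$, and $\sum_{\{u,v\}}\mathrm{codeg}(u,v)=\sum_i\binom{d_i}{2}$. Two applications of Jensen's inequality --- first $\sum_i\binom{d_i}{2}\ge n\binom{2m/n}{2}$, then $\sum_{\{u,v\}}\binom{\mathrm{codeg}(u,v)}{2}\ge\binom n2\binom{\bar c}{2}$ with $\bar c$ the average codegree --- yield a lower bound growing like $n^2\bar c^2$, and $\bar c\ge(1-o(1))\,4m^2/n^3$, which the hypothesis $m>c\cdot\ex(n,C_4)$ with $c>1$ forces to exceed $1$ by a constant $\delta=\delta(c)>0$. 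Tracking the constants then produces an admissible $\gamma=\gamma(c)$.

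There is no real obstacle here, as this is a corollary; the one slightly delicate point is that Theorem~\ref{es} is quoted with an unspecified threshold constant, so one must either settle for the conclusion when $c$ exceeds $c_0/a_1$, or supply the elementary double-convexity estimate above, which is valid down to $c$ just above $1$ and is arguably the most natural proof in the case of $C_4$. Everything else is bookkeeping with the known order of magnitude of $\ex(n,C_4)$.
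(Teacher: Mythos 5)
Your proposal is correct, and its first half is exactly the route the paper intends: Corollary~\ref{Erdo} is stated without proof as a quotation from the survey, being the specialization $a=b=2$ of Theorem~\ref{es} combined with the classical fact $\ex(n,C_4)=\Theta(n^{3/2})$, which is precisely your first paragraph. Where you go beyond the paper is the second half: since Theorem~\ref{es} comes with an unspecified threshold constant, you supply a self-contained double-Jensen (K\H{o}v\'ari--S\'os--Tur\'an--type) count valid for every $c>1$; this is not in the paper for the non-bipartite host, but it is the same technique the paper itself uses for $K_{n,n}$ in Theorem~\ref{theoLB}, so it buys a proof of the corollary with an explicit $\gamma(c)$ on the whole meaningful range of $c$, at no extra conceptual cost. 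Your observation that the literal ``arbitrary $c>0$'' cannot be right for $c\le 1$ is also sound and consistent with the paper's own Theorem~\ref{magnitude}(i)--(ii), where just above the Zarankiewicz threshold the count is far below $\gamma m^4/n^4$. One small slip worth fixing: the number of $C_4$'s is $\frac12\sum_{\{u,v\}}\binom{\mathrm{codeg}(u,v)}{2}$, not $\sum_{\{u,v\}}\binom{\mathrm{codeg}(u,v)}{2}$, since each quadrilateral is counted once for each of its two diagonal pairs; this only rescales $\gamma$ and does not affect the argument.
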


As it was noted in their paper, the random graph with $m$ edges shows that this bound is sharp, which is a natural phenomenon in these kind of problems. Two fundamental questions arise here. The first is to determine the exact value of $\gamma$ depending on $c$. The sequent one is to find the largest number $c'$ for which the theorem  does not hold, and to characterize the number of copies of $F$ appearing in graphs having $\ex(n,F)\leq m\leq (c'+o(1))n^{2-\frac{1}{a}}$ edges.

Notice that the first question has a strong  connection to the famous Sidorenko conjecture \cite{Sido} which was proposed independently in a slightly different form by Erdős and Simonovits \cite{ES83}. We consider a discrete form similar to that of Erdős and Simonovits.

\begin{conj}[Sidorenko, Erdős-Simonovits \cite{ES83}]\label{sid} Let $H$ be a bipartite graph with vertex sets $a_1,\dots,a_r$ and $b_1,\dots,b_s$. Let the number of edges of $H$ be $m_0$. Let $G$ be a bipartite graph of density $\alpha$ with vertex sets $X$ and $Y$ and let $\phi$ and $\psi$ be random functions from $\{a_1,\dots,a_r\}$ to $X$ and from $\{b_1,\dots,b_s\}$ to $Y$. Then the probability that $\phi(a_i)\psi(b_i)$ is an edge of $G$ for every pair $i,j$ such that $a_ib_j$ is an edge of $H$ is at least $\alpha^{m_{0}}$.
\end{conj}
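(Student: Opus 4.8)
The statement is the general Sidorenko--Erd\H{o}s--Simonovits conjecture, which is open; what follows is the line of attack I would pursue, which settles the conjecture for the widest classes presently reachable (in particular for $C_4$ and every $K_{2,t}$, the cases relevant to this paper) and pins down precisely where the obstruction lies.

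\textbf{Reformulation and base cases.} First I would pass from the probabilistic phrasing to the equivalent homomorphism-density inequality: writing $t(H,G)$ for the probability that a uniformly random map $V(H)\to V(G)$ sends every edge of $H$ to an edge of $G$, and $\alpha=t(K_2,G)$ for the edge density of the host bipartite graph, the claim becomes $t(H,G)\ge \alpha^{m_0}$ for every bipartite $H$ with $m_0$ edges. It is then convenient to work in the graphon limit: by a routine compactness/sampling argument it suffices to prove $t(H,W)\ge \big(\int_{[0,1]^2}W\big)^{m_0}$ for every symmetric measurable $W\colon[0,1]^2\to[0,1]$, which makes Cauchy--Schwarz, H\"older, Jensen and the entropy method directly available. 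I would record the trivial base cases --- $H$ an edge (equality) and $H$ a star $K_{1,s}$ (Jensen applied to the degree function of $W$) --- and the two structural closure properties: the family of graphs satisfying Sidorenko's bound is closed under disjoint union and under gluing two members along a single common vertex or a single common edge.

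\textbf{Building up the admissible classes.} The core of the plan is to enlarge the family of $H$ for which the bound is known, in roughly this order. (i) \emph{Forests}: induction on the number of edges, deleting a leaf and applying Jensen, gives the inequality (with the stronger ``local'' conclusion) for all trees. (ii) \emph{The Cauchy--Schwarz class}: Sidorenko's original cut/decomposition argument --- iterated Cauchy--Schwarz along a vertex adjacent to one full side --- yields the bound for complete bipartite graphs (hence $C_4$, all $K_{2,t}$, all $K_{a,b}$ at once), for even cycles, for grids, and more generally for bipartite $H$ one of whose parts has size at most $4$. (iii) \emph{The entropy method plus the tensor-power trick}: modelling a carefully coupled random homomorphism $H\to W$ and bounding its entropy by a telescoping sum of conditional entropies along a well-chosen ordering, then comparing with $m_0\cdot H(W)$, proves the bound for reflection trees, subdivisions, and the weakly norming families; and whenever one only obtains $t(H,W)\ge c\,(\int W)^{m_0}$ with $c>0$ \emph{independent of $W$}, replacing $W$ by $W^{\otimes k}$ (equivalently $H$ by a suitable blow-up) and letting $k\to\infty$ upgrades it to the sharp constant $c=1$. (iv) \emph{Gluing}: for an $H$ not covered directly, look for a decomposition $H=H_1\cup H_2$ across a vertex set $S$ that is independent (or induces a complete bipartite pattern), combine Sidorenko for $H_1$ and $H_2$ with H\"older over the interface variables, and conclude.

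\textbf{The main obstacle.} The genuine difficulty --- and the reason the conjecture remains unproved --- is that none of the above tools sees \emph{all} bipartite $H$: the Cauchy--Schwarz/entropy arsenal fundamentally requires $H$ to be assembled from trees and complete-bipartite pieces via vertex and edge gluings and reflections, and there are bipartite graphs (high-girth, expander-like, or simply admitting no such decomposition) for which every known choice of test functional fails to deliver a bound uniform in $W$; even isolated small instances historically needed ad hoc arguments. Hence the realistic deliverable of this plan is a uniform proof of the conjecture for the union of the classes in steps (i)--(iv) --- which already covers $C_4$ and every $K_{2,t}$, so it suffices for all applications in the present paper --- together with the clean reduction that the \emph{general} conjecture follows once, for each remaining $H$, one exhibits a single $W$-independent constant $c>0$ with $t(H,W)\ge c\,(\int W)^{m_0}$, after which the tensor-power trick finishes; removing that last hypothesis for arbitrary bipartite $H$ is exactly the part I do not expect to settle within this framework.
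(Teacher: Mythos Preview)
The statement you were asked to prove is labelled \texttt{conj} in the paper and is presented there as the open Sidorenko--Erd\H{o}s--Simonovits conjecture; the paper gives no proof of it, only the remark that it has been confirmed for several families (including complete bipartite graphs) and is ``still widely open.'' So there is no paper proof to compare your attempt against.

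Your write-up is not a proof either, and you say so explicitly: it is a survey of the known partial techniques (Jensen for stars and trees, iterated Cauchy--Schwarz for $K_{a,b}$ and even cycles, the entropy method, the tensor-power upgrade, gluing closures) together with an honest identification of where the obstruction lies. As a research plan this is accurate and well organised, and it does cover the cases $C_4$ and $K_{2,t}$ that the paper actually uses. But it does not, and by your own admission cannot, deliver the conjecture in full; the step ``for each remaining $H$, exhibit a $W$-independent constant $c>0$'' is precisely the open problem, not a gap you could close with more effort inside this framework. In short: you have correctly diagnosed that the statement is a conjecture and given a competent account of the state of the art, which is the appropriate response here, but no proof exists to be found or compared.
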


Note that in this variant, we not only consider the copies of $H$ but also its images under homomorphisms (edge-preserving mappings) with appropriate weights. This difference between the statement of Conjecture \ref{sid} and Theorem \ref{es} can be essential only if the host graph $G$ is not dense.

The Sidorenko-Erdős-Simonovits conjecture was confirmed for several bipartite graph families, including complete bipartite graphs (see for instance \cite{Conlon, Fox, Sido2, Szegedy}), but is still widely open. One can observe here that in the \textit{dense case}, when $m=\Omega(n^2)$, the density parameters happen to be the same. Also, this conjecture reveals the reason why Theorem \ref{Kab} was restricted to complete bipartite graphs: the reason was not only our lack of knowledge on the exact value of $\ex(n,F)$ in general for bipartite $F$, but the structure of graphs with few copies of $F$ in dense graphs is not known as well.

Our aim is to improve the bound in Theorem \ref{Kab} in the \textit{sparse case}, that is, when $|E(G)|=o(n^2)$. In order to do that, from now on we consider only graphs $G$ which are subgraphs of a balanced bipartite graph $K_{n,n}$. We introduce some notations.

\begin{nota} $ $
\begin{itemize} \item $F(n+n,G)$ denotes the number of $F$-subgraphs in the bipartite graph $G$ on partition classes of size $n$.
 \item $F(n+n,m)$ denotes the minimal number of $F$-subgraphs a graph $G$ can have where \mbox{$G\subseteq K_{n,n}$} have $m$ edges. $d(i)$ denotes the degree of  a vertex $i$ in $G$
 \item $d(X):=d_G(X)$ denotes the co-degree of $X$, i.e. the number of  common neighbors of the vertices of $X\subseteq V(G)$ in $G$. For the degree of a single vertex $y$ we use the standard notion $d(y)$.
\end{itemize}
\end{nota}

\begin{remark} The definition of $F(n+n,m)$  implies that $$z(n,n,a,b)=\max \{ m \ |  \ K_{a,b}(n+n,m)=0 \}$$ holds for the well known Zarankiewicz-number, see \cite{survey}.
\end{remark}

In this paper we mainly focus on the questions concerning the density parameters described above for the simplest non-trivial bipartite graph, the quadrilateral.\\ 
In the spirit of Theorem \ref{es} we establish a general lower bound on $C_4(n+n,m)$ in the next section, which points out the dependence of the density parameters $ c, \gamma$ as follows.

\begin{theorem}\label{magnitude} Suppose $m=n(\sqrt{n}+\frac{1}{2})+\xi(n)$ for $\xi(n)\geq 0$. Then

\begin{enumerate}[(i)]
	\item $ \xi(n)=O(\sqrt{n})$ implies $C_4(n+n,m)=\Omega({n})$;  
	
		\item $  \sqrt{n} \ll \xi(n) \ll n\sqrt{n}   $   \hspace{0,2cm} $C_4(n+n,m)\geq (\frac{1}{2}+o(1))\sqrt{n}\xi(n)$;   
	
	\item	$ \xi(n) =Cn\sqrt{n}$ implies $C_4(n+n,m)\geq \left(\frac{C(C+2)(1+C)^2}{4} +o(1)\right)n^2$;  
	
	\item	$  \xi(n) \gg  n\sqrt{n}$    implies   $C_4(n+n,m)=(1+o(1))\frac{1}{4}\left(\frac{m}{n}\right)^4$.  
	\end{enumerate}
\end{theorem}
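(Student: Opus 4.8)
\emph{Plan of the proof.} The whole statement will follow from two applications of convexity — Jensen's inequality for the convex function $f(x)=\binom{x}{2}=\tfrac{x(x-1)}{2}$ — together with an elementary case analysis; only part~(iv) will need in addition a matching construction, for which the random graph already mentioned after Corollary~\ref{Erdo} suffices.

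Let $A,B$ be the two classes of a graph $G\subseteq K_{n,n}$ with $m$ edges, $|A|=|B|=n$, and put $D:=m/n$. Counting paths of length two with centre in $B$ gives the identity $\sum_{\{x,y\}\subseteq A} d(\{x,y\}) = \sum_{w\in B}\binom{d(w)}{2}$, and since $\sum_{w\in B}d(w)=m$, Jensen yields $\sum_{w\in B}\binom{d(w)}{2}\ge n\binom{D}{2}=\tfrac12 nD(D-1)$. Counting copies of $C_4$ by the pair of their vertices lying in $A$ gives $C_4(n+n,G)=\sum_{\{x,y\}\subseteq A}\binom{d(\{x,y\})}{2}$; a second application of Jensen to the $\binom n2$ numbers $d(\{x,y\})$, whose average is at least $T_0:=\tfrac12 nD(D-1)/\binom n2=D(D-1)/(n-1)$, together with monotonicity of $t\mapsto t(t-1)$ on $[\tfrac12,\infty)$ and the fact that $T_0\ge 1$ here, gives
\begin{equation*}
C_4(n+n,m)\ \ge\ \binom n2\binom{T_0}{2}\ =\ \frac{n}{4(n-1)}\,D(D-1)\bigl(D(D-1)-(n-1)\bigr).
\end{equation*}

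Now substitute $D=\sqrt n+\tfrac12+\xi/n$. With $a:=\sqrt n+\xi/n$ one has the clean identity $D(D-1)=a^{2}-\tfrac14=n-\tfrac14+\tfrac{2\xi}{\sqrt n}+\tfrac{\xi^{2}}{n^{2}}$, so $D(D-1)-(n-1)=\tfrac34+\tfrac{2\xi}{\sqrt n}+\tfrac{\xi^{2}}{n^{2}}$, and the displayed bound reads $\bigl(\tfrac14+o(1)\bigr)\bigl(n-\tfrac14+u+\tfrac{u^{2}}{4n}\bigr)\bigl(\tfrac34+u+\tfrac{u^{2}}{4n}\bigr)$, where $u:=2\xi/\sqrt n$ and $\tfrac{u^{2}}{4n}=\tfrac{\xi^{2}}{n^{2}}$. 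Each regime is read off by deciding which term dominates in each factor. In (i), $\xi=O(\sqrt n)$ gives $u=O(1)$, so the first factor is $(1+o(1))n$ and the second is $\ge\tfrac34$, whence the bound is $\ge(\tfrac{3}{16}+o(1))n=\Omega(n)$. In (ii), $\sqrt n\ll\xi\ll n\sqrt n$ gives $1\ll u\ll n$ and $\tfrac{u^{2}}{4n}=o(u)$, so the factors are $(1+o(1))n$ and $(1+o(1))u$, giving $(1+o(1))\tfrac14 nu=(\tfrac12+o(1))\sqrt n\,\xi$. In (iii), $\xi=Cn\sqrt n$ gives $u=2Cn$ and $\tfrac{u^{2}}{4n}=C^{2}n$, so the first factor is $(1+o(1))(1+C)^{2}n$ and the second is $(1+o(1))C(C+2)n$, giving $\bigl(\tfrac{C(C+2)(1+C)^{2}}{4}+o(1)\bigr)n^{2}$. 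In (iv), $\xi\gg n\sqrt n$ forces $D=(1+o(1))m/n\to\infty$ with $D^{2}/n\to\infty$, hence $D(D-1)=(1+o(1))D^{2}$, $T_0=(1+o(1))D^{2}/n$, and the bound becomes $(1+o(1))\tfrac14(m/n)^{4}$.

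For the matching upper bound in (iv) — the only remaining point — I will take a random subgraph of $K_{n,n}$ with edge probability $p=m/n^{2}$: its expected number of $C_4$'s is $\binom n2^{2}p^{4}=(1+o(1))\tfrac14(m/n)^{4}$, and since $p\gg n^{-1/2}$ we are well above the appearance threshold so the count concentrates around its mean; adjusting the edge count to exactly $m$ by adding or deleting $o(m)$ edges changes the $C_4$-count only at lower order, so $C_4(n+n,m)\le(1+o(1))\tfrac14(m/n)^{4}$ and (iv) is tight. The only genuine labour is the error-term bookkeeping across the boundary and intermediate regimes — confirming that the additive $-\tfrac14$, the $\tfrac34$, and the cross term $\tfrac{u^{2}}{4n}$ never spoil the stated leading constants — which is routine once the identity $D(D-1)=a^{2}-\tfrac14$ is in hand, and I do not anticipate any serious obstacle. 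I also stress that the lower bounds in (ii)--(iii) are \emph{not} asserted here to be tight: that would require the two Jensen steps to be simultaneously near-equalities (i.e.\ $G$ biregular with near-uniform codegrees), which is exactly what the finite-geometric almost-difference-set constructions in the later sections supply.
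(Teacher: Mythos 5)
Your proposal is correct and follows essentially the same route as the paper: you rederive the double-Jensen (K\H{o}v\'ari--S\'os--Tur\'an-type) lower bound of Theorem \ref{theoLB}, substitute $m=n(\sqrt n+\tfrac12)+\xi(n)$ so that $m(m-n)=n^2\bigl(n-\tfrac14+\tfrac{2\xi}{\sqrt n}+\tfrac{\xi^2}{n^2}\bigr)$, and read off the four regimes, with the random balanced bipartite graph supplying sharpness in case (iv), exactly as in the paper. Your extra care with the monotonicity/truncation issue in the second Jensen step and with the alteration argument for the upper bound in (iv) only adds detail the paper leaves implicit.
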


 Then we point out the conditions under which the general lower bound is met, namely if there exists a so-called symmetric (almost) design with required parameters. The latter fact in turn implies that for supersaturated extremal problems concerning bipartite graphs, we cannot count on unique stability results similar to Theorem \ref{LS}, as several infinite families  of parameters $2-(v,k, \lambda)$ exist with many non-isomorphic block designs \cite{cont}.\\ 
  In Section 3. we investigate the case when the excess is small, i.e. case $(ii)$ in Theorem \ref{magnitude}. It turns out that in this  case some sort of stability does exist, and it relies on some finite geometry argument which leads to the discovery of a new almost difference set family. Then in the next Section we examine the case when the excess is the same order of magnitude as the Zarankiewicz number $z(n,n,2,2)$ (case $(iii)$), and prove the following asymptotic result.
	
\begin{theorem} \label{ennyi} For any fixed positive integer $k$, if $m=(\sqrt{k}+o(1))n\sqrt{n}$, then 
$$\frac{C_4(n+n, m)}{n^2}\rightarrow \frac{k(k-1)}{4}$$ while the random balanced bipartite  graph contains $\frac{k^2}{4}n^2$ quadrilaterals.
\end{theorem}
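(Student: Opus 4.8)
The plan is to sandwich $C_4(n+n,m)$ between the general lower bound of Theorem~\ref{magnitude}$(iii)$ and a design‑theoretic construction, and then read off the random‑graph figure by a first–moment computation. For the \emph{lower bound}, write $m=n(\sqrt n+\tfrac12)+\xi(n)$; the hypothesis $m=(\sqrt k+o(1))n\sqrt n$ says exactly that $\xi(n)=(\sqrt k-1+o(1))\,n\sqrt n$, i.e. we are in case $(iii)$ with $C=\sqrt k-1$. Since $C_4(n+n,m)$ is non‑decreasing in $m$, for each fixed $\varepsilon>0$ and all large $n$ we may apply that case with $C-\varepsilon$ in place of $C$:
$$C_4(n+n,m)\ \ge\ \left(\frac{(C-\varepsilon)(C-\varepsilon+2)(1+C-\varepsilon)^2}{4}+o(1)\right)n^2 .$$
Letting $\varepsilon\to 0$ and using $\frac{C(C+2)(1+C)^2}{4}=\frac{(\sqrt k-1)(\sqrt k+1)k}{4}=\frac{k(k-1)}{4}$ gives $C_4(n+n,m)\ge\bigl(\tfrac{k(k-1)}4+o(1)\bigr)n^2$.

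\emph{Upper bound / construction.} I would realize $G\subseteq K_{n,n}$ as (a padding of) the point--block incidence bipartite graph of a symmetric $2$-$(v,K,k)$ design with $v=(1+o(1))n$ and $K$ given by $K(K-1)=k(v-1)$, so $K=(\sqrt k+o(1))\sqrt v$. Such a design has exactly $v$ blocks, is $K$-regular on both sides, and — the crucial point — has every pair of points in exactly $\lambda=k$ common blocks; hence the incidence graph has $\binom v2\binom k2=\bigl(\tfrac{k(k-1)}4+o(1)\bigr)n^2$ quadrilaterals and $vK=(\sqrt k+o(1))n\sqrt n$ edges. As exact symmetric designs with $\lambda=k$ do not exist at every order (for $k=1$ they are the projective planes, but already $k=2$ gives the sporadic biplanes), I would instead use a \emph{near}-symmetric design: a cyclic almost difference set of size $(\sqrt k+o(1))\sqrt v$ whose difference function takes values in $\{k-1,k\}$ with only $O(\sqrt v)$ values equal to $k-1$ — whose development then has every codegree in $\{k-1,k\}$ and all but $o(v^2)$ of them equal to $k$ — or, failing an explicit such family, a near‑perfect $(v,K,k)$-packing produced by the R\"odl nibble (in the auxiliary hypergraph whose vertices are the pairs and whose edges are the $K$-subsets, codegrees are smaller than the degrees by a factor $O(K/v)=o(1)$, so a packing with $(1-o(1))v$ blocks and every pair‑multiplicity at most $k$ exists, with only $o(v^2)$ under‑covered pairs). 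In all cases every codegree is at most $k$ and all but $o(v^2)$ equal $k$, so $\sum_{\{x,y\}}\binom{d(\{x,y\})}{2}=\binom v2\binom k2+o(n^2)$; padding with isolated vertices up to $K_{n,n}$, and adding or deleting $o(n\sqrt n)$ further edges inside a $C_4$-free part to hit the prescribed value of $m$, changes this by only $o(n^2)$. Since $C_4(n+n,m)$ is non‑increasing in $n$, it is enough to carry this out along any sequence of orders $v$ with consecutive ratios tending to $1$, which these families provide.

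\emph{Random graph, and the obstacle.} For $G=G(n+n,p)$ with $p=m/n^2=(\sqrt k+o(1))n^{-1/2}$, linearity of expectation gives $\mathbb E[C_4]=\binom{n}{2}^2 p^4=\bigl(\tfrac14+o(1)\bigr)n^4\cdot(k^2+o(1))n^{-2}=\bigl(\tfrac{k^2}{4}+o(1)\bigr)n^2$, and a routine second‑moment or martingale argument yields concentration; thus the design‑like optimum beats the random graph by the factor $\frac{k-1}{k}$. The main obstacle is the construction step: one needs, for a dense set of orders, bipartite ``$k$-fold planes'' — every two vertices on one side having exactly $k$ common neighbours — with neighbourhoods of the correct size $K=\Theta(\sqrt{kv})$; for $k\ge 2$ this forces one out of the realm of classical symmetric designs and into almost difference sets or nibble‑type near‑perfect packings, and one must then verify that the negligible defect (the $o(v^2)$ exceptional pairs, the padding, and the $o(n\sqrt n)$ edge correction) contributes only $o(n^2)$ copies of $C_4$.
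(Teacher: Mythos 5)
Your lower bound and the random-graph computation are fine and coincide with the paper's: the paper likewise obtains $C_4(n+n,m)\ge(\tfrac{k(k-1)}{4}+o(1))n^2$ from Theorem \ref{magnitude} (equivalently Theorem \ref{theoLB}) with $C=\sqrt k-1$. The gap is in your upper-bound step, which is the actual content of the theorem. You correctly observe that genuine symmetric $2$-$(v,K,k)$ designs are unavailable for general orders, but both of your substitutes are existence claims that you do not establish: the cyclic almost difference set of size $(\sqrt k+o(1))\sqrt v$ with difference function in $\{k-1,k\}$ is exactly the kind of object one would have to construct (no known family is cited, and constructing one is essentially the problem itself), and the R\"odl-nibble fallback does not follow from standard theorems in this parameter range. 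In your auxiliary hypergraph the vertices are the $\binom v2$ pairs and the hyperedges are $K$-subsets, so the edge size is $\binom K2=\Theta(v)=\Theta(\sqrt N)$ where $N$ is the number of vertices; Pippenger--Spencer/Frankl--R\"odl type results need bounded (or very slowly growing) edge size, and the codegree-versus-degree ratio $O(K/v)$ alone is not enough. This is not a pedantic point: the paper's own concluding remarks state that asymptotic tightness of the Sch\"onheim/Johnson covering and packing bounds is \emph{not} known precisely in this regime of block size $\Theta(\sqrt n)$, so the near-perfect $k$-fold packing you invoke cannot be taken off the shelf.

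The paper closes this gap constructively: Construction \ref{mors} (a M\"ors/F\"uredi-type algebraic graph $G^{(q,k)}$ over $\mathbb F_q$ with $q\equiv 1 \pmod k$) is shown in Propositions \ref{elek} and \ref{c4ek} to be $(q-1)$-regular with all codegrees in $\{0,k\}$ (almost all equal to $k$), giving exactly $\frac{q(q-1)^3}{4}\left(1-\frac1k\right)$ quadrilaterals with $m=(\sqrt k+o(1))n\sqrt n$ edges, and Huxley's theorem on primes $\equiv 1\pmod k$ in short intervals supplies a dense sequence of admissible $n$, after which the interpolation/monotonicity step you describe is the same. So your outline identifies the right target structure (codegrees essentially constant equal to $k$, negligible exceptional pairs, harmless edge corrections of size $o(n\sqrt n)$), but as written the proof is incomplete at its crucial point: you must either exhibit an explicit family (as the paper does) or prove a packing theorem for block size $\Theta(\sqrt v)$ that is currently not available by a routine nibble argument.
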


We also extend this theorem for the number of graphs $K_{2,t}$.

Finally in Section \ref{4.0} we put our results into perspective and point out the connections of this problem  with clique-packing or covering problems in graph theory, discussing some open questions.

\section{Lower bound on the number of $C_4$s in terms of the number of edges}


\begin{theorem}[Theoretical lower bound]\label{theoLB}
Let $G$ be a subgraph of $K_{n,n}$ with $m$ edges on partite classes $X$ and $Y$. Then the number of $F=K_{a,b}$  subgraphs in $G$ is at least $$ \displaystyle\binom{n}{a}
\displaystyle\binom{{n\binom{\overline{d}}{a}}\cdot{\binom{n}{a}}^{-1}}{b},$$ provided $A\subseteq X$ and $B\subseteq Y$, where $\overline{d}=m/n$ is the average degree in $G$.

 Here we consider the truncated  extension of the binomials for which $\binom{n}{k}= \frac{\prod_{i=0}^{k-1}{(n-i)}}{k!}$ if $n\geq k\geq 0$, otherwise it is zero. Note that this function of $n$ is convex for fixed $k$ if $n>0$.
\end{theorem}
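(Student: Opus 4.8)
The plan is to run the standard Kővári–Sós–Turán double-counting argument twice, each time invoking the convexity of the (truncated) binomial $t\mapsto\binom{t}{k}$. First I would set up the count of ``$a$-cherries'': for $S\in\binom{X}{a}$ write $c(S):=d_G(S)$ for the number of common neighbours of $S$ in $Y$, and double count the pairs $(S,y)$ with $S\subseteq N(y)$ to obtain the identity
$$\sum_{S\in\binom{X}{a}} c(S) \;=\; \sum_{y\in Y}\binom{d(y)}{a}.$$
Since $\sum_{y\in Y} d(y)=m$ and $|Y|=n$, Jensen's inequality for the convex function $t\mapsto\binom{t}{a}$ gives $\sum_{y\in Y}\binom{d(y)}{a}\ge n\binom{\overline d}{a}$ with $\overline d=m/n$. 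Consequently the average of $c(S)$ over the $\binom{n}{a}$ sets $S$ is at least $n\binom{\overline d}{a}\big/\binom{n}{a}$.

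Next I would extend cherries to full copies of $K_{a,b}$. Every subgraph copy of $K_{a,b}$ whose $a$-class lies inside $X$ and whose $b$-class lies inside $Y$ is obtained by first selecting $S\in\binom{X}{a}$ and then choosing $b$ of its common neighbours, so the number of such copies is exactly $\sum_{S\in\binom{X}{a}}\binom{c(S)}{b}$. Applying Jensen a second time, now to $t\mapsto\binom{t}{b}$ over the $\binom{n}{a}$ summands, together with monotonicity of this function and the lower bound on the average of $c(S)$ from the first step, yields
$$\sum_{S\in\binom{X}{a}}\binom{c(S)}{b} \;\ge\; \binom{n}{a}\binom{\, n\binom{\overline d}{a}\big/\binom{n}{a}\,}{b},$$
which is precisely the asserted bound.

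The main technical point to be careful about is that neither $\overline d=m/n$ nor the average co-degree $n\binom{\overline d}{a}\big/\binom{n}{a}$ need be an integer, so both Jensen steps must be carried out with the real-variable truncated extension of the binomial coefficient. One therefore has to verify that this extension — equal to $\frac{1}{k!}\prod_{i=0}^{k-1}(t-i)$ for $t\ge k$ and to $0$ for $t<k$ — is genuinely convex and non-decreasing on $[0,\infty)$; the only delicate place is the junction at $t=k$, where the left derivative is $0$ and the right derivative is $(k-1)!/k!=1/k\ge 0$, so convexity is preserved, and the statement already flags this. A minor secondary point: the identity in the first step holds because each common-neighbour set is counted with exactly the multiplicity $\binom{d(y)}{a}$, and restricting to copies with the $a$-class in $X$ and the $b$-class in $Y$ can only decrease the count of $K_{a,b}$'s, so the inequality remains valid (and nontrivial) even in the symmetric case $a=b$ relevant to $C_4$.

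I do not expect any genuine obstacle beyond this bookkeeping: the estimate is a clean two-fold convexity bound, and its sharpness (for the purposes of the later theorems) is what will actually require work, not its proof.
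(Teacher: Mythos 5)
Your proposal is correct and is essentially the paper's own proof: the identical double count $K_{a,b}(n+n,G)=\sum_{A\subseteq X,\,|A|=a}\binom{d(A)}{b}$ together with two applications of Jensen's inequality to the convex truncated binomial, merely presented in the opposite order (cherry count first, then the extension to $K_{a,b}$). The only slip is in your side remark on convexity: the polynomial $\frac{1}{k!}\prod_{i=0}^{k-1}(t-i)$ vanishes at $t=k-1$, not at $t=k$, so the continuous convex gluing with $0$ occurs at $t=k-1$ (where the right derivative is $1/k$, the quantity you computed), but since every evaluation in the first Jensen step is at an integer and truncating the final argument can only weaken the stated bound, this does not affect the validity of the argument.
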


Although it is essentially the application of the proof idea of the Kövári-Sós-Turán theorem, we give a proof for the sake of completeness.
 
\begin{proof}[Proof]
	We may fix first a set of cardinality $a$ in $X$ and count the those sets in $Y$ which have exactly $b$ vertices, which are all adjacent to the vertices of the $a$-set; then we can sum this up to all $a$-sets. This provides 
	
	$$K_{a,b}(n+n,G)=\sum_{A\subseteq X, |A|=a} \binom{d(A)}{b}.$$
	
	By applying Jensen's inequality, we get
	
		$$K_{a,b}(n+n,G)\geq  \binom{n}{a}\binom{   {\sum_{A\subseteq X,|A|=a}{d(A)}}{\binom{n}{a}^{-1}}}{b}.$$
	
	Note that the sum of the co-degrees counts the way one can choose a vertex from $Y$ and $a$ neighbors of that vertex. Hence we obtain
	$$\sum_{A\subseteq X,|A|=a}{d(A)}= \sum_{y\in Y}\binom{d(y)}{a}\geq n\binom{n^{-1}\sum_y d(y)}{a}  $$  after applying Jensen's inequality again. This yields the stated expression.
\end{proof}


This result enables us to prove Theorem \ref{magnitude} to reveal  the connection between the density parameters $\gamma$ and $c$ in Corollary \ref{Erdo} for the graph $F=C_4$.

\begin{proof}[Proof of Theorem \ref{magnitude}] The result follows from Theorem \ref{theoLB}. Suppose that $m=n(\sqrt{n}+\frac{1}{2})+\xi(n)$ for $\xi(n)\geq 0$. Then the lower bound implies that 
	$$K_{a,b}(n+n,m)\geq \binom{n}{2}\cdot \frac{1}{2} \frac{m(m-n)}{n^2(n-1)}\left(\frac{m(m-n)}{n^2(n-1)}-1 \right).$$
	Substituting $m=n(\sqrt{n}+\frac{1}{2})+\xi(n)$ we obtain 
	
	$$K_{a,b}(n+n,m)\geq \frac{1}{4n^3(n-1)}m(m-n)   \left(m(m-n)- n^2(n-1) \right)=       $$
	
	$$\frac{1}{4n^3(n-1)}\left(n^3-n^2/4+2n\sqrt{n} \xi(n)+\xi(n)^2 \right)   \left(3n^2/4+2n\sqrt{n} \xi(n)+\xi(n)^2  \right)     $$
	
	Thus the order of magnitude of $\xi(n)$ determines the leading term in the bound, providing the result. In the case $(iv)$, balanced bipartite  random  graphs show that the lower bound is sharp.
\end{proof}

In brief, we get that if $m / z(n,n,2,2) \rightarrow \infty$, then
$C_4(n+n,m)$ indeed attained on the random graph apart from an smaller error term, while
if $m / z(n,n,2,2) \rightarrow (1+C)$ where $(C>0)$, then
$C_4(n+n,m)$ and the number of $C_4$s in the random graph are of same order of magnitude, but the two numbers not equal asymptotically.
Finally if $m / z(n,n,2,2) \rightarrow 1$, then
$C_4(n+n,m)$ is much smaller than the number of $C_4$s in the random graph.



\subsection{Conditions for sharp results}

The theoretical lower bound \ref{theoLB} can be a bit sharpened if we take into consideration that both  degrees and co-degrees are integer numbers, 
hence we could apply the following discrete variant of the Jensen inequality.

\begin{lemma}[Discrete Jensen inequality]\label{discJen}  
	Let $f: \mathbb{Z}\rightarrow \mathbb{R}$ be a convex function. Then for any set of $N$ evaluations,
	
	$$\sum_{i=1}^N f(x_i)\geq \alpha f\left(\left\lfloor\frac{\sum_i x_i}{N}\right\rfloor\right)+\beta f\left(\left\lceil\frac{\sum_i x_i}{N}\right\rceil\right),$$
	
	where $\alpha$ and $\beta$ are determined such that 
	$\alpha+\beta=N$ and
	$  \alpha \left\lfloor\frac{\sum_i x_i}{N}\right\rfloor+\beta \left\lceil\frac{\sum_i x_i}{N}\right\rceil = \sum_i x_i$.
\end{lemma}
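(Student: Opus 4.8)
The plan is to reduce the statement to the classical (continuous) Jensen inequality by interpolating $f$ linearly between consecutive integers. Write $S=\sum_{i=1}^N x_i$, $\mu=S/N$, $a=\lfloor\mu\rfloor$ and $b=\lceil\mu\rceil$; I would first dispose of the trivial case $\mu\in\mathbb Z$, where $a=b=\mu$, the weights $\alpha,\beta$ only need to satisfy $\alpha+\beta=N$, and the claim is exactly the usual Jensen inequality applied at the integer point $\mu$. So assume $\mu\notin\mathbb Z$, i.e. $b=a+1$. Solving the two linear conditions on $\alpha,\beta$ gives $\beta=S-Na=\sum_i(x_i-a)$ and $\alpha=Nb-S=\sum_i(b-x_i)$; since $a<\mu<b$ one has $\beta=N(\mu-a)\in(0,N)$ and $\alpha=N-\beta\in(0,N)$, so in particular $\alpha,\beta\ge 0$ and $\mu=(\alpha a+\beta b)/N$ is a genuine convex combination of $a$ and $b$.

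Next I would introduce the piecewise-linear extension $\tilde f:\mathbb R\to\mathbb R$ of $f$: set $\tilde f(k)=f(k)$ for $k\in\mathbb Z$ and let $\tilde f$ be affine on each interval $[k,k+1]$. The slope of $\tilde f$ on $[k,k+1]$ equals the forward difference $f(k+1)-f(k)$, and convexity of $f$ on $\mathbb Z$ (in the sense of nondecreasing forward differences, equivalently $2f(k)\le f(k-1)+f(k+1)$ for all $k$) says precisely that these slopes are nondecreasing; hence $\tilde f$ is convex on $\mathbb R$. Applying the classical Jensen inequality to the convex function $\tilde f$ at the points $x_1,\dots,x_N$, and using $\tilde f(x_i)=f(x_i)$ because the $x_i$ are integers, yields $\sum_i f(x_i)=\sum_i\tilde f(x_i)\ge N\,\tilde f(\mu)$. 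Finally, since $\mu\in[a,b]$ with $b=a+1$ and $\tilde f$ is affine on $[a,a+1]$, we have $\tilde f(\mu)=\tfrac{\alpha}{N}f(a)+\tfrac{\beta}{N}f(b)$; multiplying by $N$ gives the claimed bound.

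An alternative, entirely elementary route avoids the continuous extension: start from any integer multiset with sum $S$ and repeatedly replace a pair $x_i<x_j$ with $x_j-x_i\ge 2$ by $x_i+1,\,x_j-1$. This preserves $S$, does not increase $\sum f$ by the one-line convexity estimate $f(x_i+1)+f(x_j-1)\le f(x_i)+f(x_j)$, and strictly decreases $\sum_i(x_i-\mu)^2$, so after finitely many steps every value lies in $\{a,b\}$, and the resulting multiset is then forced to be $\alpha$ copies of $a$ and $\beta$ copies of $b$, giving the bound. I do not expect a real obstacle here; the only delicate points are fixing the precise meaning of "convex function $\mathbb Z\to\mathbb R$" (it must be the second-difference condition, not merely the restriction of a convex function on $\mathbb R$), verifying $\alpha,\beta\ge 0$, and handling the boundary case $\mu\in\mathbb Z$ separately, after which the argument is routine.
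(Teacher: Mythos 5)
Your proof is correct. Note, however, that the paper itself offers no proof of this lemma: it is stated as a known, folklore fact (the ``improved theoretical lower bound'' is then obtained simply by invoking it in place of ordinary Jensen), so there is no argument of the author's to compare against. Of your two routes, the piecewise-linear extension $\tilde f$ is the cleanest: you correctly identify that convexity of $f$ on $\mathbb{Z}$ must mean nondecreasing forward differences (which matches the paper's usage, e.g.\ the truncated binomial $n\mapsto\binom{n}{k}$), that this makes $\tilde f$ convex on $\mathbb{R}$, and that $\tilde f(\mu)=\frac{\alpha}{N}f(\lfloor\mu\rfloor)+\frac{\beta}{N}f(\lceil\mu\rceil)$ because $\tilde f$ is affine on the unit interval containing $\mu$; together with classical Jensen this gives exactly the stated bound, and your handling of $\alpha,\beta\ge 0$ and of the degenerate case $\mu\in\mathbb{Z}$ is right. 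The alternative exchange argument (replacing $x_i,x_j$ with $x_i+1,x_j-1$ when $x_j-x_i\ge 2$, using $f(x_i+1)+f(x_j-1)\le f(x_i)+f(x_j)$ and the strict decrease of $\sum_i x_i^2$ for termination) is also sound and has the minor advantage of staying entirely within $\mathbb{Z}$, at the cost of a termination argument. Either write-up would serve as a complete proof of the lemma the paper leaves unproved.
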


If we apply Lemma \ref{discJen} instead of Jensen's inequality, we got a slightly stronger lower bound that is independent of divisibility conditions of the parameters. We apply it for the case $F= K_{2,2}$, and refer to it as \textit{improved theoretical lower bound}.

	

It is important to note that while the incidence graph of projective planes provides an infinite family of constructions where one can state sharp results for $a=b=2$, the Zarankiewicz problem is notoriously hard in general thus only bounds are known for $z(n,n,a,b)$ when $b\geq a > 2$. Notably, the bound gained from  the discrete Jensen inequality (see the paper of Roman \cite{Roman}) is not sharp for every value $n$ with  $a, b > 2$ fixed. This follows from the improvement by Füredi\cite{Fured} on the bound of the Zarankiewicz number $z(n,n,s,t)$, who proved that 
$$z(n,n,s,t)\leq ((b-a+1)^{1/a}+o(1))n^{2-1/a},$$ while the well known Kővári-Sós-Turán theorem closely related to Theorem \ref{theoLB} only give  \  $z(n,n,s,t)\leq((b-1)^{1/a}+o(1))n^{2-1/a}$.

Interestingly, it can be improved in the case $a=b=2$ as well for certain values of $n$ due to Damásdi, Héger and Szőnyi see  \cite{Heger}, and  the  bound on $z(n,n,a,b)$ in the same spirit of Theorem \ref{theoLB} turned out to be far from sharp even in this case for particular values of $n$. 

From now on, we focus on the conditions when a graph attains the theoretical lower bound for the case $K_{a,b}=K_{2,2}$ in Theorem \ref{theoLB}. The following statement is the straightforward consequence of the two estimations via Jensen's inequality in the proof.

\begin{cor}\label{general1} A  graph $G$ attains the bound of Theorem \ref{theoLB} for $K_{a,b}=K_{2,2}$ if and only if
\begin{itemize}
	\item $G$ is   regular;
	\item every subset  $A\subseteq X$ of size $a$ has  the same number of common neighbors, that is, $||d(A)|-|d(A')||=0$ $ \forall A, A'\subseteq X, |A|=|A'|=a$.
\end{itemize}
\end{cor}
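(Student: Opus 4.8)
The plan is to trace the two applications of Jensen's inequality that prove Theorem~\ref{theoLB} in the case $a=b=2$ and to read off exactly when each of them is tight. Put $r:=\overline d=m/n$ and $\sigma:=\sum_{A\subseteq X,\,|A|=2}d(A)$. For $K_{2,2}$ the proof of Theorem~\ref{theoLB} is the two-line chain
\[
C_4(n+n,G)=\sum_{A\subseteq X,\,|A|=2}\binom{d(A)}{2}\ \ge\ \binom{n}{2}\binom{\sigma\binom{n}{2}^{-1}}{2},
\qquad
\sigma=\sum_{y\in Y}\binom{d(y)}{2}\ \ge\ n\binom{r}{2},
\]
the first inequality being Jensen over the $\binom n2$ two-subsets of $X$ and the second Jensen over the $n$ vertices of $Y$; substituting the second estimate into the first reproduces the bound displayed in Theorem~\ref{theoLB}. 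Hence $G$ meets that bound precisely when \emph{both} of these inequalities hold with equality.

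Next I would invoke the equality case of Jensen's inequality. On $\mathbb{Z}_{\ge 0}$, where all the relevant degrees and codegrees lie, the truncated binomial $x\mapsto\binom x2$ coincides with $x(x-1)/2$, the restriction of a strictly convex parabola, so at equality the evaluated arguments must all be equal. Thus equality in $\sigma\ge n\binom r2$ says that all $Y$-degrees $d(y)$ coincide; since they sum to $m$, their common value is $r=m/n$, so $G$ is $r$-regular on the side $Y$ (and $r\in\mathbb{Z}$). Likewise equality in the first estimate says that all pair-codegrees $d(A)$, $A\in\binom X2$, share a single value $\lambda$ --- which is exactly the second bullet of the corollary. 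Therefore attaining the bound is equivalent to the conjunction ``$G$ is $Y$-regular'' and ``all $2$-subsets of $X$ have equal codegree''.

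It remains to reconcile this with the two bullets as stated. The reverse implication is immediate: if $G$ is $r$-regular and every $X$-pair has codegree $\lambda$, then every argument in the second estimate equals $r$ and every argument in the first equals $\lambda$, both estimates are equalities, and $C_4(n+n,G)=\binom n2\binom\lambda2$ is exactly the value of the bound. For the forward implication I must still promote ``$Y$-regular $+$ constant codegree'' to ``$G$ regular'', which I would do by a double count: for every $x\in X$,
\[
\sum_{x'\in X\setminus\{x\}}d(\{x,x'\})=\sum_{y\in N(x)}\bigl(d(y)-1\bigr)=d(x)(r-1)
\]
by $Y$-regularity, whereas the left-hand side is $(n-1)\lambda$ by the codegree condition; for $r\ge 2$ this forces $d(x)=(n-1)\lambda/(r-1)$, independent of $x$, and $\sum_{x\in X}d(x)=m=rn$ then gives $d(x)\equiv r$. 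I expect this last double count --- together with the remark that the degenerate range $r\le 1$ (in which the bound equals $0$ and is met by non-regular graphs as well) has to be set aside in the forward direction --- to be the only step that is not a direct reading of the equality case of Jensen's inequality.
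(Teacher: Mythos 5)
Your proposal is correct and follows essentially the paper's own route: the paper derives the corollary directly from the equality conditions of the two Jensen estimates in the proof of Theorem~\ref{theoLB}, exactly as you do. Your additional double count promoting $Y$-regularity plus constant codegree to full regularity, and your caveat about the degenerate range where the bound vanishes, are details the paper leaves implicit but do not constitute a different approach.
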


Observe that conditions of the structure of graph attaining the improved theoretical lower bound with  Lemma \ref{discJen}  can be derived  similarly.

\begin{cor}\label{general2} A  graph $G$ attains the {\em improved theoretical lower bound} for $K_{2,2}$ if and only if
	\begin{itemize}
		\item $G$ is  almost regular, that is,  
		$|d(v)-d(v')|\leq 1$ \ \ \ ($\forall v,v' \in V(G)$);
		\item every subset  $A\subseteq X$ of size $a$ has almost the same number of common neighbors, that is, $||d(A)|-|d(A')||\leq 1$ $ \forall A, A'\subseteq X, |A|=|A'|=a$.
	\end{itemize}
\end{cor}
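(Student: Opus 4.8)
The plan is to reproduce, for $K_{2,2}$, the equality analysis behind Corollary~\ref{general1}, but with Jensen's inequality everywhere replaced by the discrete Jensen inequality of Lemma~\ref{discJen}. Recall that the \emph{improved theoretical lower bound} for $K_{2,2}$ is obtained from the proof of Theorem~\ref{theoLB} by carrying out its two successive estimates with Lemma~\ref{discJen}, both times applied to the convex function $f=\binom{\cdot}{2}$: an \emph{outer} step bounding $K_{2,2}(n+n,G)=\sum_{A\subseteq X,\,|A|=2}\binom{d(A)}{2}$ from below in terms of the $\binom n2$ co-degrees $d(A)$, followed by an \emph{inner} step bounding $\sum_{A}d(A)=\sum_{y\in Y}\binom{d(y)}{2}$ from below in terms of the $n$ degrees $d(y)$. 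Thus $G$ attains the improved bound if and only if both of these applications of Lemma~\ref{discJen} hold with equality, and the whole task is to describe those two equality cases.

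The one fact needed is the equality case of Lemma~\ref{discJen} for $f=\binom{\cdot}{2}$ on nonnegative integers: one has $\sum_i f(x_i)=\alpha f\big(\lfloor\overline{x}\rfloor\big)+\beta f\big(\lceil\overline{x}\rceil\big)$ if and only if every $x_i$ lies in $\{\lfloor\overline{x}\rfloor,\lceil\overline{x}\rceil\}$, equivalently $|x_i-x_j|\le 1$ for all $i,j$. The ``if'' direction is a direct computation, since a configuration all of whose values lie in two consecutive integers is precisely the balanced configuration on the right-hand side. For ``only if'' I would use an exchange argument as in the proof of Lemma~\ref{discJen}: the first differences $\binom{k+1}{2}-\binom{k}{2}=k$ are nondecreasing, and strictly increasing past $0$, so whenever $x_i=0$ and $x_j\ge 2$ the substitution $(x_i,x_j)\mapsto(x_i+1,x_j-1)$ keeps $\sum_i x_i$ fixed and strictly decreases $\sum_i f(x_i)$; hence any configuration that is not ``within $1$'' fails to minimise $\sum_i f(x_i)$ for its sum and cannot give equality.

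Combining: equality in the outer step forces $|d(A)-d(A')|\le 1$ for all size-$2$ sets $A,A'\subseteq X$, and equality in the inner step forces $|d(y)-d(y')|\le 1$ for all $y,y'\in Y$. To upgrade the latter to almost-regularity of all of $V(G)$, I would use the symmetry of the $C_4$-count, $K_{2,2}(n+n,G)=\sum_{B\subseteq Y,\,|B|=2}\binom{d(B)}{2}$: running the same two estimates through the pairs of $Y$ produces the identical improved lower bound, which depends only on $n$ and $m$, so attaining that bound also forces $|d(x)-d(x')|\le 1$ for all $x,x'\in X$. Since $\sum_{x\in X}d(x)=\sum_{y\in Y}d(y)=m$, the two sides share the common degree-average $m/n$, and therefore $|d(v)-d(v')|\le 1$ for all $v,v'\in V(G)$, i.e. $G$ is almost regular; this gives necessity of the two stated conditions. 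Conversely, if $G$ is almost regular then in particular the degrees in $Y$ differ pairwise by at most $1$, so the inner step is an equality; if in addition the size-$2$ co-degrees in $X$ differ pairwise by at most $1$, the outer step is an equality as well, and $G$ attains the improved bound.

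The main obstacle, and the only real subtlety, is the \emph{decoupling} of the two nested estimates: given tightness of the composed bound, one must know that each constituent step is tight, which requires the value of the outer discrete-Jensen estimate to be strictly increasing in its sum-parameter $\sum_A d(A)$ near the relevant value. This holds whenever the balanced co-degree configuration has value at least $1$ — that is, throughout the regime $m\ge z(n,n,2,2)$ of interest here, where the average degree is $\gg 1$. At the very boundary $m=z(n,n,2,2)$ the improved bound equals $0$, ``attaining it'' just means being $C_4$-free, and one needs the (routine, design-theoretic) fact that an extremal $C_4$-free subgraph of $K_{n,n}$ is almost regular; outside this boundary case the argument is a transcription of the proof of Corollary~\ref{general1} with Lemma~\ref{discJen} in place of Jensen's inequality.
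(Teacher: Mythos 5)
Your proposal is correct and follows essentially the same route as the paper, which derives Corollary~\ref{general2} exactly as Corollary~\ref{general1} but with Lemma~\ref{discJen} in place of Jensen's inequality, the equality case of the discrete Jensen step being ``all values in $\{\lfloor\overline{x}\rfloor,\lceil\overline{x}\rceil\}$.'' Your extra care about decoupling the two nested estimates (monotonicity of the outer bound in its sum parameter) and about the degenerate regime where the bound is $0$ is a point the paper glosses over, and it does no harm.
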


\subsection{Case of equality with the theoretical lower bound} \label{2.0}

Here we make the connection between balanced bipartite graphs attaining the theoretical lower bound and symmetric block designs or symmetric almost designs.
We recall the definitions of these structures, together with some useful observations which will be essential later on.

\begin{defi}[Design] $\mathcal{D(P,B)}$ is a \textit{symmetric $2-(v,k,\lambda)$ block  design} if \\
 $\mathcal{P}$ is a set of points,   $\mathcal{B}$ is a set of $k$-uniform blocks, with\\
 $\bullet$ $|\mathcal{P}|=v=|\mathcal{B}|$, and \\ $\bullet$  for every pair of points $ \{x,y\}\subset \mathcal{P}$ the number of incident blocks to this pair  $|B_i\in\mathcal{B}: x,y\in B_i|=\lambda$.
\end{defi}

Note that the equality $v\binom{k}{2}=\binom{v}{2}\lambda$  connects the parameters of  symmetric designs.
Corollary \ref{general1} directly implies the following.

\begin{cor}[Every design provides equality] In the incidence graphs of a symmetric $2-(v,k,\lambda)$ block  designs we have\\
$\bullet$ $|d(i)-d(i')|=0$ \ \ \ $( \forall i,i' \in V(G) )$;\\
$\bullet$ $|d(\{j,k\})-d(\{j', k'\})|=0$ \ \ \ $ ( \forall \{j,k\}, \{j',k'\} \in \binom{V(G)}{2} )$, thus these graphs provide equality in Theorem \ref{theoLB}
\end{cor}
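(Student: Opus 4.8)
The plan is to verify the two structural conditions in Corollary \ref{general1} for $a=b=2$ and then invoke that corollary. Write $G$ for the incidence graph of $\mathcal D=\mathcal D(\mathcal P,\mathcal B)$: its partite classes are $X=\mathcal P$ and $Y=\mathcal B$, and a point $p\in\mathcal P$ is joined to a block $B\in\mathcal B$ exactly when $p\in B$. With this description the co-degree of a $2$-subset of $X$ is the number of blocks through the corresponding pair of points, and the co-degree of a $2$-subset of $Y$ is the size of the intersection of the corresponding pair of blocks.

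First I would establish regularity. Every block-vertex has degree $|B|=k$ by definition. For a point $x$, counting the pairs $(y,B)$ with $y\neq x$ and $\{x,y\}\subseteq B$ in two ways gives $r(k-1)=\lambda(v-1)$, where $r$ is the number of blocks through $x$; this simultaneously shows $r=\lambda(v-1)/(k-1)$ is independent of $x$. Combining this with the identity $v\binom k2=\binom v2\lambda$ recorded above (equivalently with the flag count $vr=bk$ and the symmetric hypothesis $b=v$) forces $r=k$. Hence every vertex of $G$ has degree $k$, so $G$ is regular and the first bullet of Corollary \ref{general1} holds.

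Next I would handle the co-degree condition. For a pair of points the co-degree equals $\lambda$ directly from the definition of a $2$-design, so the point side is immediate. For a pair of distinct blocks $\{B,B'\}$ the co-degree is $|B\cap B'|$, and here I would invoke the classical dual property of symmetric designs: letting $N$ be the square incidence matrix, one has $NN^{\top}=(k-\lambda)I+\lambda J$; since a nontrivial design satisfies $k>\lambda$, the matrix $NN^{\top}$ is nonsingular, so $N$ is invertible, and from $NJ=JN=kJ$ we obtain $N^{\top}N=N^{-1}(NN^{\top})N=(k-\lambda)I+\lambda J$ as well, whose off-diagonal entries are precisely the pairwise block-intersection numbers. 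Thus every $2$-subset of $X$ and every $2$-subset of $Y$ has co-degree exactly $\lambda$, which gives $\bigl||d(A)|-|d(A')|\bigr|=0$ and hence the second bullet of Corollary \ref{general1}. Applying that corollary concludes the proof.

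The main obstacle is the dual intersection identity $|B\cap B'|=\lambda$; everything else is bookkeeping with standard design identities. That identity is classical but does require the short linear-algebra step above (or an equivalent eigenvalue argument), so I would present it with care or cite it. I would also remark that for the equality in Theorem \ref{theoLB} alone only the point-side condition is actually needed, since that bound is obtained by fixing the $a$-element set inside $X$; the block-side statement in the corollary merely records that symmetric designs are self-dual in the relevant sense.
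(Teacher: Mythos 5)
Your proposal is correct and takes the same route the paper intends: the paper states this corollary as an immediate consequence of Corollary \ref{general1}, treating the standard design identities (every point lies on $r=k$ blocks in a symmetric design, pairs of points lie on $\lambda$ common blocks, and dually any two blocks meet in $\lambda$ points) as known facts, which you verify explicitly via the counting identity $r(k-1)=\lambda(v-1)$ and the classical $NN^{\top}=(k-\lambda)I+\lambda J$ argument. Your closing remark is also accurate: for equality in Theorem \ref{theoLB} only regularity on the block side and constant co-degree of point pairs are needed, the block-intersection statement being the self-dual bonus of symmetric designs.
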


A bit more general concept is the so-called almost design or \textit{adesign}.

\begin{defi}[Adesign] $\mathcal{D(P,B)}$ is a  \textit{symmetric $2-(v,k,\lambda)$ block adesign} (almost design) if
$\mathcal{P}$ denotes a set of points,   $\mathcal{B}$ is a set of $k$-uniform blocks, with \\$\bullet$ $|\mathcal{P}|=v=|\mathcal{B}|$, and\\ $\bullet$  for every pair of points $\{x,y\}\subset \mathcal{P}$ the number of incident blocks to this pair   $|B_i\in \mathcal{B}: x,y\in B_i|$ equals $\lambda$ or $\lambda+1$.
\end{defi}

Similarly to the designs, it is easy to see that the more general Corollary \ref{general2} implies

\begin{cor}[Every Adesign provides equality]\label{ady} In the  incidence graphs of symmetric $2-(v,k,\lambda)$  adesigns we have\\
$\bullet$ $|d(i)-d(i')|=0$ \ \ \ $( \forall i,i' \in V(G) )$;\\
$\bullet$ $|d(\{j,k\})-d(\{j', k'\})|\leq 1$ \ \ \ $ ( \forall \{j,k\}, \{j',k'\} \in \binom{V(G)}{2})$, thus these graphs provides equality in the improved theoretical lower bound.
\end{cor}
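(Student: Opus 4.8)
The plan is to check that the incidence graph $G$ of a symmetric $2-(v,k,\lambda)$ adesign $\mathcal{D}(\mathcal{P},\mathcal{B})$ meets the two structural requirements of Corollary \ref{general2}, and then to invoke that corollary. Recall that $G$ is bipartite with classes $\mathcal{P}$ and $\mathcal{B}$, a point $p$ being joined to a block $B$ exactly when $p\in B$; since $|\mathcal{P}|=v=|\mathcal{B}|$ we have $G\subseteq K_{v,v}$, which is the setting of Corollary \ref{general2}.

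First I would settle the degrees. Every block is $k$-uniform, so each vertex of $\mathcal{B}$ has degree $k$; and each point lies in exactly $k$ blocks. For the latter I would use that the replication number of a symmetric design (or adesign) equals $k$: once one knows it is constant, say $r$, the incidence count $\sum_{B\in\mathcal{B}}|B|=vk=\sum_{p\in\mathcal{P}}d(p)=vr$ forces $r=k$. For the adesigns that actually arise in this paper, namely developments $\{D+g:g\in\Gamma\}$ of an almost difference set $D$ in a group $\Gamma$ of order $v$, constancy is immediate, since $x\in D+g$ iff $g\in x-D$, so $x$ lies in exactly $|D|=k$ blocks with no divisibility constraint. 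Thus $G$ is $k$-regular, which yields $|d(i)-d(i')|=0$ and in particular the ``almost regular'' hypothesis of Corollary \ref{general2}.

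Next I would verify the co-degree condition with $X=\mathcal{P}$. The common neighbours of two points $p,q$ in $G$ are precisely the blocks containing both of them, so $d(\{p,q\})$ equals the pair-count $\lambda_{pq}$, which by the adesign axiom lies in $\{\lambda,\lambda+1\}$. Hence $|d(\{p,q\})-d(\{p',q'\})|\le 1$ for all pairs in $\binom{\mathcal{P}}{2}$, exactly the second hypothesis of Corollary \ref{general2}. (The block-side analogue $|B\cap B'|\in\{\lambda,\lambda+1\}$ also holds — classically for symmetric designs, and for a group development $|(D+g)\cap(D+h)|$ is the autocorrelation $d_D(h-g)\in\{\lambda,\lambda+1\}$ — but it is not needed, since Corollary \ref{general2} refers to a single vertex class.) With both hypotheses verified, Corollary \ref{general2} gives that $G$ attains the improved theoretical lower bound for $K_{2,2}$, which is the assertion.

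The one step I expect to require genuine care is the constancy of the replication number: the adesign axioms by themselves only bound $d(p)(k-1)=\sum_{q\neq p}\lambda_{pq}$ between $(v-1)\lambda$ and $(v-1)(\lambda+1)$, a window that does not pin down $d(p)$ in general. This is precisely why the group-developed constructions are the natural setting here — there the full regularity $d(p)=k$ is automatic and the verification above goes through without modification; for an arbitrary symmetric adesign one should read constancy of the replication number as part of the definition.
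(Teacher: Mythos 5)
Your proposal is correct and follows essentially the same route as the paper, which proves this corollary simply by checking the two hypotheses of Corollary \ref{general2} (constant degrees, pair co-degrees in $\{\lambda,\lambda+1\}$) and invoking it. Your extra remark is well taken: the paper's definition of a symmetric adesign does not by itself force a constant replication number, so the first bullet implicitly assumes point-regularity (which, as you note, is automatic for the developments of almost difference sets actually used in Theorem \ref{completing}).
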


Various infinite families of designs and adesigns exist, see \cite{cont} for recent surveys.
For our purposes, we describe a special family of designs, which are connected to difference sets.

\begin{defi}  A $(v,k,\lambda)$ \textit{difference set} is a subset $D$ of size $k$ of (an additive) group $G$ of order $v$ such that every non-zero element of $G$ can be expressed as a difference $d-d'$  with a pair of elements of $D$ in exactly $\lambda$ ways. A difference set $D$ is said to be cyclic or Abelian if the group $G$ has the corresponding property. 
\end{defi}

A difference set with $\lambda = 1$ is  called \textit{planar}, and it is well known due to Singer that there always exist planar difference sets  for $G=\mathbb{Z}_n$ with $n=q^2+q+1$ where $q$ is some power of a prime, as cyclic projective planes provide such examples. For further details, we refer to \cite{cont, diff}.

\begin{defi}  A $(v,k,\lambda)$ \textit{almost difference set} is a subset $D$ of size $k$ of (an additive) group $G$ of order $v$ such that every non-zero element of $G$ can be expressed as $d-d'$  with a pair of elements of $D$  in exactly $\lambda$  or $\lambda+1$ ways, with both cases appearing.
\end{defi}

\begin{remark} If $D$ is an (almost) difference set, and $g\in G$, then $-D=\{-d:d\in D\}$ is also an (almost) difference set, furthermore $g+D=\{g+d:d\in D\}$ is also an (almost) difference set, and is called a translate of $D$. The set of translates of $D$ yields a  $2-(v,k,\lambda)$ design (or adesign).
\end{remark}

A key concept concerning difference sets is the multiplier.

\begin{defi} A multiplier of a difference set $D$ in group $G$ is a group automorphism $ \phi$  of $G$ such that $D^{\phi }=g+D$ for some $ g\in G$. If $G$ is abelian and $\phi$  is the automorphism that maps $ h\rightarrow  t\cdot h$, then $t$ is called a (numerical)  multiplier.
\end{defi}



\section{$F=C_4$, when the excess is small - connection to finite geometries}

In this section we prove that slightly above the Zarankiewicz number (in cases (i) and (ii) of Theorem \ref{magnitude}) it is possible to attain the improved lower bound by adding edges to the incidence graph $G(\Pi_q)$ of a projective plane $\Pi_q$. On the other hand, this extension method only works in a rather small domain. Finally we describe the incidence structure of points and lines corresponding to the added edges. For an introduction on finite geometries, we refer to \cite{ball}.

\begin{theorem}\label{completing} If $n=q^2+q+1$ for  a prime power $q=p^h$ and $m\leq z(n,n,2,2)+n$, then $C_4(n+n,m)$ meets the improved theoretical lower bound. 
\end{theorem}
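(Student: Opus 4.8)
The plan is to start from the incidence graph $G(\Pi_q)$ of a projective plane of order $q$ on $n = q^2+q+1$ points and $n$ lines, which is $(q+1)$-regular and has exactly $m_0 = n(q+1) = z(n,n,2,2)$ edges, and to show that adding up to $n$ edges one at a time keeps the graph at the improved theoretical lower bound throughout the range $m_0 \le m \le m_0 + n$. Since $n(q+1)$ is (up to the additive constant hidden in $m = n(\sqrt n + \tfrac12)+\xi$) precisely the Zarankiewicz threshold, this covers cases (i) and (ii) of Theorem~\ref{magnitude}. By Corollary~\ref{general2} it suffices to produce, for each such $m$, a subgraph $G \subseteq K_{n,n}$ that is almost regular (degrees differing by at most $1$) and for which all pairwise co-degrees on each side differ by at most $1$; the value of the improved lower bound is then forced by Lemma~\ref{discJen}.

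First I would record what $G(\Pi_q)$ itself gives: it is $(q+1)$-regular, and by the axioms of a projective plane \emph{every} pair of points lies on exactly one common line and every pair of lines meets in exactly one point, so all co-degrees equal $1$ and $G(\Pi_q)$ attains the (improved) lower bound at $m = m_0$. The core of the argument is then a controlled edge-addition process. I would add the new edges so that they form, on the point side, a matching-like structure, and I would track the two quantities that Corollary~\ref{general2} constrains. Adding an edge between point $P$ and line $\ell$ with $P \notin \ell$ raises $d(P)$ and $d(\ell)$ by one; to preserve almost-regularity after adding $k \le n$ edges one wants each vertex to be incident to at most one new edge, i.e. the added edges form a partial matching between the $n$ points and the $n$ lines (a system of $k$ non-incident point–line pairs, no point or line repeated). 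This is exactly where a difference-set / finite-geometry input enters: using a cyclic model $\Pi_q$ with point set and line set identified with $\mathbb{Z}_n$ via a Singer cycle, the "new" pairs can be taken as $\{(i, i+s)\}$ for a suitable fixed shift $s$, which automatically gives a perfect matching of non-incident pairs and makes the effect on co-degrees translation-invariant.

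The delicate point — and the main obstacle — is controlling the co-degrees of pairs of \emph{points} (equivalently pairs of lines) after the additions. Two points $P, P'$ originally have co-degree $1$ (their unique joining line); after adding edges, the co-degree of $\{P,P'\}$ increases by the number of newly added lines incident (in the new graph) to both $P$ and $P'$. Since each point receives at most one new edge, this increment is $0$ or $1$, so co-degrees stay in $\{1,2\}$ and the "differ by at most $1$" condition is automatically satisfied — \emph{provided} we never want it to be better than that, which Corollary~\ref{general2} confirms we don't. So the argument that co-degrees remain within $1$ of each other is in fact immediate once the added edges form a partial matching; the real content is (a) checking that one can always find such a partial matching of non-incident point–line pairs of \emph{any} size $k$ from $0$ up to $n$ — for $k = n$ this is a perfect matching of $\mathbb{Z}_n$ to itself avoiding the incidence relation, which the shift construction supplies as long as some shift $s$ avoids all incidences, a counting/pigeonhole fact since each point is on only $q+1 \ll n$ lines — and (b) verifying that the numerical value of the improved lower bound, as a function of $m$, is exactly the number of $C_4$'s in the resulting graph, which is a direct computation: count quadrilaterals as $\sum_{P}\binom{d(P)}{2}$-type sums over co-degrees, which are now all $1$ or $2$ in known proportions. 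I would also need to note that the method \emph{fails} once $m > m_0 + n$, since then almost-regularity forces some vertex to carry two new edges and the clean co-degree control breaks — this matches the theorem's hypothesis $m \le z(n,n,2,2)+n$. Finally, I would remark that the incidence structure of the added edges — a perfect or partial matching in the cyclic model — is precisely what will be reinterpreted in the next part of the section as an almost difference set, closing the loop with the paper's stated goal.
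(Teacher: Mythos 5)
Your overall route is the paper's: in the cyclic (Singer) model your shift matching $\{(i,i+s)\}$ is literally the paper's construction of adjoining one element $g$ to the planar difference set $D$ (point $i$ is added to the line $D+i+s$, so every block becomes a translate of $D\cup\{g\}$ with $g=-s$), and the final appeal to Corollary \ref{general2} is the same. However, there is a genuine gap in your co-degree control. Adding a partial matching of non-incident point--line pairs does \emph{not} increase each co-degree by at most $1$: for a pair of points $\{P,P'\}$, the new common lines can be both the line $\ell_P$ newly matched to $P$ (if $P'$ already lies on $\ell_P$ in $\Pi_q$) and the line $\ell_{P'}$ newly matched to $P'$ (if $P$ already lies on $\ell_{P'}$); these are two distinct lines, so the co-degree can jump from $1$ to $3$, while most pairs stay at $1$, and then the condition of Corollary \ref{general2} fails, so your graph does not attain the improved bound. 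In the cyclic model this bad event for the pair $\{x,y\}$ with $c=x-y\neq 0$ occurs exactly when $g-c\in D$ and $g+c\in D$, i.e. when $2g=d+d'$ for two distinct $d,d'\in D$. Hence your pigeonhole step, which only guarantees that the shift avoids incidences (i.e. $g\notin D$), is not sufficient; in fact roughly half of all shifts are bad, since about $\binom{q+1}{2}$ of the $n\approx q^2$ residues are of the form $\frac{d+d'}{2}$.

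The missing ingredient --- and the actual content of the paper's proof --- is to choose $g$ with $g\notin D$ and $2g\neq d+d'$ for all distinct $d,d'\in D$, i.e. so that $D\cup\{g\}$ is an almost difference set; the count $n-(q+1)-\binom{q+1}{2}=\binom{q}{2}>0$ (using that $n$ is odd, so division by $2$ is invertible) shows such a completion element exists. With that choice the rest of your argument is fine: the translates of $D\cup\{g\}$ form a symmetric $2$-$(n,q+2,1)$ adesign, the graph is $(q+2)$-regular with all co-degrees $1$ or $2$, and deleting part of the added matching covers every $m\le z(n,n,2,2)+n$.
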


\begin{proof} It is clearly enough to show a construction for $m= z(n,n,2,2)+n$ which contains the incidence graph of a projective plane $\Pi_q$ of order $q$. To this end, we will add a matching to $G(\Pi_q)$  between non-incident points and lines in the following way. Consider a planar difference set $D$ corresponding to  a cyclic projective plane of order $q$. We would like to add another element $g$ of $\mathbb{Z}_n$ to $D$ such that $D\cup g$ is an almost difference set, i.e. every non-zero element appears once or twice as a difference. Observe that $g$  cannot be added if only if $g$ is already an element of $D$ or $d-g=g-d' \Leftrightarrow d+d'=2g$ for a pair of different elements in $D$. Hence $n-|D|-\binom{|D|}{2}=\binom{q}{2}$ elements can complete $D$ to an almost difference set $D^*$ of size $q+2$. We call such elements \textit{completion elements} (w.r.t. $D$).\\
The incidence graph of the adesign formed by the translates of $D^*$ thus provide such an example in view of Corollary \ref{ady}.
\end{proof}

\begin{remark} The constructed $D\cup {g}$  provides a new infinite family of almost difference sets with parameters $(q^2+q+1, q+2, 1)$.
\end{remark}

\begin{remark} Note that during the completion, we used the fact that $2\nmid n$. Also, a slightly more involved calculation shows that any difference set with $\lambda=2$ can be completed by an element to an almost different set.
\end{remark}

In the geometric point of view, as the translates of the planar difference set $D$ are corresponding to the lines of the plane, one might ask about the structure of the points that can be added to the lines to obtain the desired almost difference set with its translates. 
This geometric structure of the completion elements is described in the following theorems.

\begin{theorem}\label{hyper} If the order of the projective  plane is even, then  the completion elements of the difference set $D$ are the points not incident to a dual hyperoval. 
\end{theorem}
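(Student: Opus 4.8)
The plan is to translate the condition ``$g$ is a completion element of $D$'' into a geometric incidence statement about the cyclic projective plane $\Pi_q$, and then recognise the complementary set of non-completion elements as the point set of a dual hyperoval when $q$ is even. Recall from the proof of Theorem~\ref{completing} that $g$ fails to be a completion element precisely when either $g\in D$ or there exist distinct $d,d'\in D$ with $d+d'=2g$. Since $q$ is even, $2$ is invertible modulo $n=q^2+q+1$ (as $n$ is odd), so the map $x\mapsto 2x$ is an automorphism of $\mathbb{Z}_n$; hence $d+d'=2g$ has a solution with $d\neq d'$ exactly when $g$ lies in the set $M:=\{\,2^{-1}(d+d') : d,d'\in D,\ d\neq d'\,\}$, while the condition $g\in D$ can be folded in by allowing $d=d'$, since then $2^{-1}(d+d')=d$. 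So the non-completion elements are exactly $\overline{M}=\{\,2^{-1}(d+d'):d,d'\in D\,\}$, i.e.\ the image of the ``midpoint set'' of $D$ under the fixed automorphism $x\mapsto 2^{-1}x$.

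Next I would interpret $\overline{M}$ in $\Pi_q$. Points and lines of the cyclic plane are labelled by $\mathbb{Z}_n$, with line $L_i$ = the translate $i+D$; point $j$ is on $L_i$ iff $j-i\in D$. The multiplier $t=2$ of a planar difference set (here $q$ even, and $2$ is a multiplier of the Singer difference set, which I would either cite from \cite{diff,cont} or verify) acts as a collineation $\sigma$ of $\Pi_q$ fixing the point orbits appropriately; the relevant upshot is that the set of ``midpoints'' $\{2^{-1}(d+d')\}$ is the set of points fixed by, or naturally associated with, the involution induced by $\sigma$ combined with the translation structure. I expect the cleanest route: for each unordered pair $\{d,d'\}\subseteq D$ the midpoint $2^{-1}(d+d')$ is the unique point $p$ such that $\sigma$ (an involutory collineation when $q$ is even, since $2^2\equiv 4$ acts, but the relevant involution here is $x\mapsto -x$ composed with a translate) swaps the two points $d,d'$ while fixing $p$; thus $\overline{M}$ is exactly the set of fixed points of a Baer-type involution. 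When $q$ is even an involutory collineation of $\PG(2,q)$ is an elation or a homology, and its fixed-point structure, when one takes the appropriate involution arising from the multiplier $-1$, is a line together with a point, or — after the correction accounting for the $+D$ translates — the external points of a conic form a set whose complement is a dual hyperoval; a dual hyperoval in $\PG(2,q)$, $q$ even, is a set of $q+2$ lines no three concurrent, and the points \emph{not} on any of these lines are exactly $\binom{q}{2}$ in number, matching the count $n-|D|-\binom{|D|}{2}=\binom{q}{2}$ from Theorem~\ref{completing}. I would make this count the consistency check that pins down the identification.

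Concretely, the steps are: (1) rewrite the non-completion set as $\overline{M}=\{2^{-1}(d+d'):d,d'\in D\}$ using that $2$ is invertible mod $n$; (2) show $|\overline{M}| = |D| + \binom{|D|}{2} = (q+1)+\binom{q+1}{2}$ by proving all these midpoints are distinct — this distinctness is equivalent to $D$ containing no three-term configuration $d_1+d_2=d_3+d_4$ with $\{d_1,d_2\}\neq\{d_3,d_4\}$, which follows from $D$ being a planar ($\lambda=1$) difference set together with $2$ being a multiplier (so a coincidence of midpoints would force a coincidence of differences); (3) hence $|\,\mathbb{Z}_n\setminus\overline{M}\,| = n - (q+1) - \binom{q+1}{2} = \binom{q}{2}$, the set of completion elements; (4) identify $\mathbb{Z}_n\setminus\overline{M}$ with the set of points of $\Pi_q$ avoided by a specific set $\mathcal{O}^*$ of $q+2$ lines, and show $\mathcal{O}^*$ is a dual hyperoval (no three concurrent) by dualising the standard fact that the multiplier $2$ (equivalently, the orbit structure of $D$ under $x\mapsto 2x$ when $q$ is even) produces a conic/hyperoval in $\PG(2,q)$; (5) conclude. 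The main obstacle I anticipate is step~(4): making precise why the particular $q+2$ lines that arise as the ``obstruction lines'' to completing $D$ form a dual hyperoval rather than some other $(q+2)$-line configuration. I would handle this by reducing it to the classical statement — going back to Singer's construction — that when $q$ is even the $q+2$ common points of the difference set and its image under the multiplier $-1$ (or the relevant fixed structure of the involution $x\mapsto 2^{-1}x$ modulo translation) constitute a hyperoval, and then dualising; the $\binom{q}{2}$ count from the already-proved Theorem~\ref{completing} serves as the numerical certificate that no other configuration is possible. A secondary subtlety is the clean bookkeeping of which translate of which set one is taking — I would fix conventions at the outset so that ``point $g$ is a completion element'' reads directly as ``$g$ is on none of the lines of the dual hyperoval''.
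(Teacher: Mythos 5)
There is a genuine gap, and it sits exactly where you flag it: step (4). Your reduction of the problem to the midpoint set $\overline{M}=\{2^{-1}(d+d'):d,d'\in D\}$ and the count $\binom{q}{2}$ of completion elements agree with the paper, but you never identify \emph{which} $q+2$ lines form the dual hyperoval, nor why the obstruction set is a union of lines at all. The paper's key observation is structural: $\overline{M}$ decomposes as the union of the $q+1$ translates $\frac{d}{2}+\frac{D}{2}$, $d\in D$, and by Hall's theorem (the Result quoted in the paper, \cite{Hall}) $2$ and $\frac12$ are multipliers of $D$ precisely when $q$ is even, so $\frac{D}{2}$ is a translate of $D$ and hence each set $\frac{d}{2}+\frac{D}{2}$ is a \emph{line} of $\Pi_q$; these $q+1$ lines pairwise meet in exactly one point (namely $\frac{d+d'}{2}$), no three are concurrent (a consequence of $\lambda=1$: two representations $2g=d_1+e_1=d_2+e_2$ force equal unordered pairs), and together with $D$ they form the dual hyperoval. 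Your proposal replaces this with talk of Baer-type involutions, elations/homologies, fixed points of the involution $x\mapsto -x$, and ``external points of a conic'' — this is not a correct mechanism for the even case (the oval/conic picture is what happens for $q$ odd, i.e.\ Theorem \ref{bundle}), and it never produces the required $q+2$ lines.

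Your fallback, using the $\binom{q}{2}$ count as a ``numerical certificate,'' cannot close this gap on its own. A counting argument of that type can only certify that no three of the lines are concurrent \emph{after} you already know that the non-completion set is covered by $q+2$ specified distinct lines (since $q+2$ pairwise intersecting distinct lines cover at most $(q+2)(q+1)-\binom{q+2}{2}$ points, with equality exactly in the dual-hyperoval configuration). What it cannot do is conjure the lines themselves: that step genuinely requires the multiplier-$\tfrac12$ argument showing $\frac{D}{2}$ is a translate of $D$, which is absent from (and obscured by) your middle paragraph. Also, a small point: the distinctness of midpoints in your step (2) needs only $\lambda=1$ and the invertibility of $2$ modulo $n$, not the multiplier property.
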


\begin{theorem}\label{bundle} If the order of the projective  plane is odd, then  the completion elements of the difference set $D$ are the points not incident to a set of $q+1$ ovals which pairwise intersect each other in exactly one point.
\end{theorem}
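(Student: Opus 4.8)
The plan is to make explicit, from the proof of Theorem~\ref{completing}, the set $N$ of non-completion elements and to exhibit it as the union of $q+1$ ovals. Recall that $g$ fails to be a completion element exactly when $g\in D$ or $2g=d_1+d_2$ for some distinct $d_1,d_2\in D$; since $n=q^2+q+1$ is odd the latter amounts to $g=2^{-1}(d_1+d_2)$, the ``midpoint'' of the pair $\{d_1,d_2\}$. Because $D$ is a planar difference set, the $\binom{q+1}{2}$ midpoints are pairwise distinct and disjoint from $D$ — a coincidence between two midpoints, or between a midpoint and an element of $D$, yields two distinct ordered pairs from $D$ with the same nonzero difference — so $N$ has $(q+1)+\binom{q+1}{2}=\binom{q+2}{2}$ elements, matching the count used in Theorem~\ref{completing}. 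The first key point is that $N=\bigcup_{d\in D}O_d$, where $O_d:=2^{-1}(d+D)=\tfrac{d}{2}+\tfrac12 D$: the summand $e=d$ of $2^{-1}(d+D)$ contributes $d$, and the summands $e\neq d$ contribute precisely the midpoints. These $q+1$ sets are pairwise distinct, since $D+t=D$ forces the order of $t$ to divide $\gcd(q+1,q^2+q+1)=1$.

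Next I would prove that each $O_d$ is an oval. As $O_d$ is a translate of $\tfrac12 D$ and translations are collineations of $\Pi_q$, it suffices to treat $\tfrac12 D$, and for this I would pass to the Desarguesian (Singer) model: identify the points of $\Pi_q=\PG(2,q)$ with $\mathbb{Z}_n$ via a generator $\omega$ of $\GF(q^3)^\ast$, so that $D=\{i:\omega^i\in H\}$ for a $2$-dimensional $\GF(q)$-subspace $H=\ker\phi$ with $\phi\colon\GF(q^3)\to\GF(q)$ a nonzero linear form. Then $\tfrac12 D=\{i:\phi((\omega^i)^2)=0\}$, i.e. $\tfrac12 D$ is the point set of the plane quadric $\{[x]:Q(x)=0\}$ for the $\GF(q)$-quadratic form $Q(x)=\phi(x^2)$ (its polar form is $B(x,y)=2\phi(xy)$, and $2$ is invertible). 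This quadric has exactly $q+1$ points, because $i\mapsto 2^{-1}i$ is a bijection of $\mathbb{Z}_n$ carrying $D$ onto $\tfrac12 D$; hence it is either a nondegenerate conic or a (double) line. The decisive step is that it is not a line: if $\{Q=0\}=PH'$ for a $2$-dimensional $H'$ with basis $p,r$, then $\phi(x^2)=0$ for all $x\in H'$, and polarising gives $\phi(p^2)=\phi(pr)=\phi(r^2)=0$; but $p^2,pr,r^2$ are $\GF(q)$-linearly independent, since any relation among them would put $p/r$ in a quadratic extension of $\GF(q)$ inside $\GF(q^3)$, impossible as $\GF(q^2)\cap\GF(q^3)=\GF(q)$ because $\gcd(2,3)=1$ — so three independent vectors would all lie in the $2$-dimensional space $H=\ker\phi$, a contradiction. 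Therefore $\tfrac12 D$, and with it each $O_d$, is a conic, in particular an oval.

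Finally I would read off the incidence pattern. For distinct $d_1,d_2\in D$ a point $g$ lies in $O_{d_1}\cap O_{d_2}$ iff $2g=d_1+a=d_2+b$ with $a,b\in D$; since $a-b=d_2-d_1\neq0$ has the unique solution $(a,b)=(d_2,d_1)$ in $D\times D$, this forces $2g=d_1+d_2$, so $|O_{d_1}\cap O_{d_2}|=1$, and the same computation with three indices forces two of them to agree, so no point lies on three of the ovals. Consequently the $q+1$ ovals $\{O_d:d\in D\}$ pairwise meet in exactly one point, their union is $N$, and the completion elements are precisely the points incident to none of them, which is the statement of Theorem~\ref{bundle}. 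I expect the nondegeneracy of $\{Q=0\}$ to be the only genuinely nontrivial point; the rest is bookkeeping with $\gcd(q+1,n)=1$ and the planar property of $D$.
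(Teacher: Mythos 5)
Your argument is correct and uses the same skeleton as the paper: you decompose the non-completion elements as $\bigcup_{d\in D}\bigl(\tfrac{d}{2}+\tfrac{D}{2}\bigr)$ and you prove the pairwise one-point intersection (and the non-concurrency) exactly as the paper does, via uniqueness of difference representations in a planar difference set. Where you genuinely diverge is at the key step that $\tfrac12 D$ is an oval: the paper simply invokes the quoted result of M.~Hall, which says that for an Abelian planar difference set $\tfrac12D$ is the set of absolute points of a polarity and is an oval when $q$ is odd, whereas you prove this from scratch in the Singer model, identifying $\tfrac12D$ with the zero locus of the quadratic form $Q(x)=\phi(x^2)$ on $\GF(q^3)$ and ruling out the double-line case by the linear independence of $p^2,pr,r^2$ (via $\GF(q^2)\cap\GF(q^3)=\GF(q)$). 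That computation is sound, and it buys you slightly more than the paper states, namely that the $q+1$ ovals are in fact conics. The trade-off is generality: your step ``identify $\Pi_q=\PG(2,q)$ and $D$ with a line orbit of the classical Singer cycle'' is an extra hypothesis, since Hall's theorem needs only that $D$ is an Abelian planar difference set, and it is not known that every cyclic projective plane is Desarguesian (nor do you justify that any cyclic planar difference set in $\mathbb{Z}_n$ whose plane is Desarguesian is equivalent to the classical one). This covers the case the paper actually constructs in Theorem~\ref{completing} (the Singer difference set for a prime power $q$), so nothing needed downstream is lost, but to match the theorem's nominal generality you would either cite Hall's result as the paper does or add a justification for the reduction to the classical Singer difference set.
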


Recall that an oval, resp. hyperoval is a set of $q+1$, resp. $q+2$ points in the projective plane of order $q$ no three of which are collinear. 
This also implies that any line intersects the hyperoval either in $0$ or $2$ points, hence  hyperovals are also maximal $(k,n)$ arcs as well. Hyperovals only exist if the order is even, and in that case every oval ($q+1$-arc) extends uniquely to a hyperoval. (For further details, see \cite{ball}.) 

A key ingredient in the proofs is the following algebraic result due to M. Hall \cite{Hall}. 

\begin{result}[Hall \cite{Hall}, see also \cite{Jung}]  
Let $D$ be an Abelian difference set for a projective plane of order $q$.
Then $2$ and $1/2$ are (numerical) multipliers of $D$ if and only if $q$ is even. Moreover, $\frac{1}{2}D$  is the set of absolute points of a certain polarity, and it forms an oval if $q$ is odd or a line if $q$ is even.
\end{result}

\begin{proof}[Proof of Theorem \ref{hyper} and \ref{bundle}]
In the proof of Theorem \ref{completing}, we saw that  $g\in \mathbb{Z}_n$  is not a completion element if and only if  $2g=d+d'$ for a pair of (not necessarily distinct) elements $d, d'$ of $D$, hence these elements $g$ have the form $\frac{d+d'}{2}$. This implies that the set of such elements are determined by the  union of the some translates of $\frac{D}{2}$, namely $$\bigcup_{d\in D}\left\{\frac{d}{2}+ \frac{D}{2} \right\}.$$

Note that these translates intersect each other in exactly one element and no three of them share a common element. It follows from Hall's result that if $q$ is even, then these are translates of $D$ since $\frac{1}{2}$ is a multiplier, hence they are lines as well, which form a dual hyperoval together with $D$. \\
If $q$ is odd, then these are translates of an oval according to Hall's result, again with no three of them share a common element and no two of them have more than one common element, completing the proof. We note that such a structure is called a projective bundle and have many interesting properties, see \cite{Jung} and references therein.
\end{proof}

Via Theorem \ref{completing} one could add $n$ edges to the incidence graph of the projective plane many ways to get a graph which attains the improved theoretical lower bound corresponding to the number of $C_4$s. However, if the excess over $z(n,n,2,2)$ is more than $n$, this is not the case.

\begin{prop} Suppose that  $n= q^2+q+1$, then $C_4(n+n,G)$ cannot meet the improved theoretical lower bound on graphs $G$ with $m$ edges containing $G(\Pi_q)$ as a subgraph if $z(n,n,2,2)+n< m \leq \sqrt{2}\cdot z(n,n,2,2)$.
\end{prop}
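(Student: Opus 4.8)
The plan is to show that adding more than $n$ edges to $G(\Pi_q)$ necessarily destroys the near-regularity of the co-degree function, so Corollary~\ref{general2} can no longer apply. Recall that the incidence graph $G(\Pi_q)$ realizes a symmetric $2$-$(n,q+1,1)$ design: every pair of points (resp.\ lines) has co-degree exactly $1$. Suppose we add a set $S$ of extra edges, $|S| = m - z(n,n,2,2) =: s$ with $n < s \le (\sqrt2 - 1) z(n,n,2,2)$. First I would record what ``attaining the improved theoretical lower bound'' forces: by Corollary~\ref{general2} the resulting graph $G$ must be almost regular (so each vertex receives $0$ or $1$ extra edges, whence $s \le n$ if the added edges form a bipartite graph on $n+n$ vertices with max degree $1$ on \emph{each} side) and every pair of vertices on the same side must have co-degree in $\{\lambda, \lambda+1\}$ for a single value $\lambda$. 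Since the base design already gives co-degree $1$ for every pair, and the total number of edges is only slightly above $z(n,n,2,2)$, the relevant $\lambda$ is forced to be $1$: the average co-degree of a pair of points computed from the degree sequence is $1 + o(1)$ in this range, so $\lambda = 1$ and every pair of points must have co-degree exactly $1$ or $2$.

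The key step is then a counting/parity obstruction coming from the first condition. If $G$ attains the improved bound, it is almost regular, so the added edges $S$ form a graph of maximum degree $1$ at every vertex — i.e.\ $S$ is a (partial) matching between non-incident point–line pairs — and hence $|S| = s \le n$. This already contradicts $s > n$, \emph{provided} we can rule out the alternative that $G$ is almost regular with some vertices of degree $q+1$ and others of degree $q+2$ but with the extra edges not forming a matching; but max-degree-$1$ on each side is exactly what ``$|d(v)-d(v')|\le 1$ for all $v,v'$'' combined with the base graph being $(q+1)$-regular gives, since any vertex of degree $\ge q+3$ violates it and the two sides must each be covered by roughly the same number of new endpoints. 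I would make this precise by a short double-counting: $2s = \sum_{v} (d_G(v) - (q+1))$ and each summand is $0$ or $1$, so $s \le n$, contradiction. The upper bound $m \le \sqrt2\, z(n,n,2,2)$ is what keeps the forced value of $\lambda$ equal to $1$ (beyond it the average pair co-degree would exceed $2$ and the argument via $\lambda=1$ breaks, which is presumably why the hypothesis is stated this way), so I would verify that in the stated range $\binom{\overline d}{2}\cdot \binom{n}{2}/\binom{n}{2} < 2$, i.e.\ $\overline d < (1+\sqrt2)\sqrt{n}+O(1)$, which holds exactly when $m < \sqrt2\, z(n,n,2,2)$ up to lower-order terms.

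The main obstacle I anticipate is handling the boundary cleanly: one must be careful that ``improved theoretical lower bound'' is the bound coming from Lemma~\ref{discJen}, not the cruder Jensen bound, so the equality characterization is Corollary~\ref{general2} with its $\le 1$ slack in \emph{both} the degrees and the co-degrees, and I must make sure the two slacks cannot be traded off against each other to evade the matching conclusion. A secondary subtlety is the possibility that $G$ does \emph{not} contain the full base design as a spanning subgraph in the ``equality'' configuration — but this is excluded by hypothesis, since the proposition restricts to graphs $G$ containing $G(\Pi_q)$. Granting that, the argument reduces to the elementary degree-sum computation above together with the range check on $\lambda$, and no finite-geometry input beyond Theorem~\ref{completing} is needed.
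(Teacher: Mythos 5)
There is a genuine gap at the heart of your argument: almost-regularity does \emph{not} force the added edges to form a matching. Your double count $2s=\sum_v\bigl(d_G(v)-(q+1)\bigr)$ is fine, but the assertion that each summand is $0$ or $1$ is unjustified. The condition $|d(v)-d(v')|\le 1$ only caps the added degree at $1$ if some vertex receives \emph{no} new edge; if every vertex on a side receives at least one new edge, then degrees lying in $\{q+2,q+3\}$ (or, for larger $s$, in $\{q+1+t,q+2+t\}$) are perfectly almost regular. For instance $s=\tfrac{3}{2}n$, with half the points getting one new line and half getting two (and similarly on the line side), satisfies the degree half of Corollary~\ref{general2}, so your contradiction $s\le n$ evaporates precisely in the range $n<s$ that the proposition is about — and certainly for $s$ up to $(\sqrt2-1)z(n,n,2,2)=\Theta(n\sqrt n)$. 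Your own caveat (``any vertex of degree $\ge q+3$ violates it'') is resolved incorrectly for the same reason: it only violates almost-regularity when a vertex of degree $q+1$ survives. So the degree condition alone cannot prove the statement; the obstruction must come from the co-degree condition, which you set up (the forced $\lambda$) but never actually use to produce a contradiction.

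The paper's proof runs through the co-degrees and needs a small geometric step that is missing from your proposal. Since the excess exceeds $n$, by pigeonhole some point $P$ receives two new edges, to lines $e,f$ with $P\not\sim e,f$ in $\Pi_q$. Let $Q=e\cap f$ and let $g$ be the line $PQ$ of $\Pi_q$ (distinct from $e,f$); then $N(P,Q)\supseteq\{e,f,g\}$, so the pair $\{P,Q\}$ has co-degree at least $3$. By the second condition of Corollary~\ref{general2} all point pairs must then have co-degree at least $2$, and counting $\sum_\ell\binom{d(\ell)}{2}=\sum_{\{P,P'\}}d(\{P,P'\})\ge 2\binom{n}{2}$ together with the (almost-)regularity of the extremal configuration forces $m$ to exceed roughly $\sqrt2\cdot z(n,n,2,2)$, contradicting the hypothesis — this is exactly where the upper bound on $m$ enters, much as you guessed in your $\lambda=1$ range check. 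To repair your write-up you would need to replace the matching/degree argument by this pigeonhole-plus-intersection argument (or some other mechanism producing a pair of co-degree $\ge\lambda+2$), since without it the two slacks in Corollary~\ref{general2} really can absorb an excess larger than $n$.
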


\begin{proof} Suppose to the contrary that there exists such a construction with excess more than $n$. The edges not contained in $G(\Pi_q)$  can be considered as newly introduced incidences between lines and points. Since the excess is more than $n$, there exists a point $P$ which is now joined with two lines $e$ and $f$ for which $P\not\sim e$ and  $P\not\sim f$ in $\Pi_q$. Lines $e$ and $f$ have a unique intersection $Q=e\cap f$ in $\Pi_q$, while $P$ and $Q$ determine a unique line $g$ in $\Pi_q$, different from $e$ and $f$. Consequently, $N(P,Q)\supseteq  \{e,f,g\}$ in the construction, which can only happen in an extremal construction if every point pair share at least two common lines, thus the statement follows.
\end{proof}

We remark in the end of this section that a recent result of Ferber, Hod, Krivelevich and Sudakov \cite{Ferber} proved the existence of certain  $2-(n,k,1)$ almost designs. However, their result cannot be applied to our case since they assume that 
the number of blocks is much larger than the number of points, but it could be applied for $F(n,n', m)$ to determine the  minimal number of $C_4$s in $m$-edge subgraphs of very much unbalanced bipartite graphs  $K_{n,n'}$ $(n\ll n')$.



\section{When the excess is of order $n\sqrt{n}$ - generalizing the ideas of Mörs}

In his paper \cite{Mors}, Mörs proved an asymptotic result for the Zarankiewicz number $z(n,n,2,k)$. In this subsection we show a family of graphs $G$ for which $K_{2,2}(n+n, G)= K_{2,2}(n+n, m)$ for $m= O(n\sqrt{n})$. The idea behind the construction is the same, namely to exploit the additive and multiplicative structure of a finite field $\mathbb{F}_q$, but the construction below is defined in a much simpler manner and also more general. This idea appeared also in the paper of Füredi \cite{fredi} concerning $ex(n, K_{2,t})$. 

\begin{construction} \label{mors}
Let  $G$ be a bipartite graph on $V_1\cup V_2$ with edge set $E:=E(G)$ as follows.\\
$V_1=V_2= A \times B$  where  $A\sim \mathbb{F}_q$ and  $B= \{1, 2, \ldots, \frac{q-1}{k}\}$, furthermore  
\begin{equation}\label{edges}
\{ (a, b), (\alpha, \beta) \}\in E \ \Leftrightarrow \ \exists j\in \mathbb{Z}_k : g^{\beta}a+g^{b}\alpha=g^{j\cdot \frac{q-1}{k}}  \mbox{ \ in \  $\mathbb{F}_q$}
\end{equation} for a fixed primitive root $g \in \mathbb{F}_q^{\times}$. Here $(a, b)\in V_1$ and  $(\alpha, \beta)\in V_2$.\\
We refer to this graph $G$, depending on parameters $q$ and $k$ as $G^{(q,k)}$
\end{construction}

\begin{prop}\label{elek} The graph in Construction \ref{mors} is regular and 
	$E(G^{(q,k)})=\frac{q(q-1)^2}{k}$.
\end{prop}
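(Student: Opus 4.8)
The plan is to rephrase the edge condition \eqref{edges} as a membership statement inside a subgroup of $\mathbb{F}_q^\times$ and then count the neighbours of a fixed vertex by a one‑line bijection argument. First I would note that, since $g$ is a primitive root of $\mathbb{F}_q^\times$, the element $g^{(q-1)/k}$ has multiplicative order exactly $k$, so the set $H:=\{g^{j(q-1)/k}:j\in\mathbb{Z}_k\}$ is precisely the unique subgroup of $\mathbb{F}_q^\times$ of order $k$. Consequently, for $(a,b)\in V_1$ and $(\alpha,\beta)\in V_2$, the pair $\{(a,b),(\alpha,\beta)\}$ is an edge of $G^{(q,k)}$ if and only if $g^{\beta}a+g^{b}\alpha\in H$.

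Next I would compute the degree of an arbitrary vertex $(a,b)\in V_1$. Fix $\beta\in B$; the map $\alpha\mapsto g^{\beta}a+g^{b}\alpha$ is an affine bijection of $\mathbb{F}_q$ onto itself, because its linear part $g^{b}$ is nonzero. Hence exactly $|H|=k$ values of $\alpha\in A\cong\mathbb{F}_q$ satisfy $g^{\beta}a+g^{b}\alpha\in H$. Summing over the $|B|=\frac{q-1}{k}$ choices of $\beta$, and observing that distinct values of $\beta$ produce distinct vertices $(\alpha,\beta)$ so that no overcounting occurs, the vertex $(a,b)$ has exactly $k\cdot\frac{q-1}{k}=q-1$ neighbours. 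Running the same argument for a fixed $(\alpha,\beta)\in V_2$, this time letting $a$ range over $\mathbb{F}_q$ for each fixed $b\in B$ (the linear part $g^{\beta}$ is again nonzero), shows that every vertex of $V_2$ also has degree $q-1$. Thus $G^{(q,k)}$ is $(q-1)$‑regular.

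Finally, since $|V_1|=|V_2|=|A|\cdot|B|=q\cdot\frac{q-1}{k}$, counting incidences from the $V_1$ side yields $|E(G^{(q,k)})|=|V_1|\cdot(q-1)=\frac{q(q-1)}{k}\cdot(q-1)=\frac{q(q-1)^2}{k}$, as claimed. I do not anticipate a genuine obstacle: this is the standard counting behind Zarankiewicz‑type algebraic constructions. The only points that deserve a word of care are the interpretation of the existential quantifier ``$\exists j\in\mathbb{Z}_k$'' in \eqref{edges} — the edge is present precisely when $g^{\beta}a+g^{b}\alpha$ lands in $H$, regardless of how many $j$ witness it — and the implicit standing hypothesis $k\mid q-1$, which is what makes $B$ (and hence the whole construction) well defined.
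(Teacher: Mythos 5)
Your proof is correct and follows essentially the same route as the paper's: the paper fixes a pair $(\beta,j)$, notes that exactly one $\alpha$ solves the equation, and checks that distinct $j$'s give distinct $\alpha$'s, which is precisely your observation that for each fixed $\beta$ the affine bijection $\alpha\mapsto g^{\beta}a+g^{b}\alpha$ pulls back the order-$k$ subgroup $H$ to exactly $k$ admissible values of $\alpha$. Your packaging via subgroup membership, plus doing the $V_2$ side explicitly and the final incidence count, is just a slightly more complete write-up of the same argument.
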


\begin{proof} Our aim is to prove that for every vertex $v=(a,b) \in V_1$, $d(v)= q-1$.
Pick an arbitrary vertex $(a,b)$.
The defining equation \ref{mors} yields that for every choice $\beta \in B, j\in \mathbb{Z}_k $, exactly one $\alpha \in A$ fulfills the condition. The proposition follows if we observe that different pairs $(\beta, j), (\beta, j')$ cannot determine
the same $\alpha \in A$.
\end{proof}

\begin{prop}\label{c4ek} The co-degree for any pair of vertices is either $0$ or $k$  in $G^{(q,k)}$, and 
$K_{2,2}(\frac{q(q-1)}{k}, G^{(q,k)})=\frac{q(q-1)^3}{4}\left(1-\frac{1}{k}\right)$. 
\end{prop}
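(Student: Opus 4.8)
The plan is to analyze the co-degree structure of $G^{(q,k)}$ directly from the defining equation~\eqref{edges}, and then assemble the count of $K_{2,2}$'s from this.

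First I would fix two distinct vertices $(a,b),(a',b')\in V_1$ and ask for the common neighbours $(\alpha,\beta)\in V_2$. A vertex $(\alpha,\beta)$ is adjacent to both exactly when there exist $j,j'\in\mathbb{Z}_k$ with $g^{\beta}a+g^{b}\alpha=g^{j(q-1)/k}$ and $g^{\beta}a'+g^{b'}\alpha=g^{j'(q-1)/k}$. For a fixed choice of $\beta$ and of the pair $(j,j')$, this is a linear system in the two unknowns $g^{\beta}$... no: the unknowns are really $\alpha$ and the ``scaling'' $g^{\beta}$, so I would treat it as a $2\times 2$ linear system over $\mathbb{F}_q$ in the variables $\alpha$ and $t:=g^{\beta}$, with right-hand side a pair of prescribed $(q-1)/k$-th powers. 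The coefficient matrix is $\begin{pmatrix} a & g^{b}\\ a' & g^{b'}\end{pmatrix}$ with determinant $ag^{b'}-a'g^{b}$. The key case split is whether this determinant vanishes. If it is nonzero, the system has a unique solution $(\alpha,t)$ for each of the $k^2$ choices of $(j,j')$; but we additionally need $t$ to actually lie in the coset $\{g^{\beta}:\beta\in B\}$, i.e. $t$ must be a $k$-th power times the right coset representative — and here I expect the multiplicative structure to force exactly one valid $\beta$ out of each orbit of size $k$, so that the $k^2$ pairs $(j,j')$ collapse to exactly $k$ genuine common neighbours. If the determinant is zero, one shows the two conditions are either incompatible (co-degree $0$) or redundant in a way that is ruled out for distinct vertices, again consistent with the dichotomy ``$0$ or $k$''. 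The analogous computation for pairs in $V_2$ is symmetric. The main obstacle will be bookkeeping the interaction between the additive linear-algebra step and the multiplicative constraint $t\in g^{B}$, i.e. verifying cleanly that scaling the right-hand side by $g^{(q-1)/k}$ exactly permutes the admissible values of $\beta$, so that ``$k^2$ solutions, $k$ of them admissible'' holds with no off-by-one or degenerate exceptions.

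Once the co-degree dichotomy is established, I would count $K_{2,2}$'s by the identity used in Theorem~\ref{theoLB}, namely
$$K_{2,2}(n+n,G)=\sum_{\{v,v'\}\subseteq V_1}\binom{d(\{v,v'\})}{2}.$$
Since every co-degree is $0$ or $k$, only pairs with co-degree $k$ contribute, each contributing $\binom{k}{2}$. So I need the number $P$ of pairs $\{v,v'\}\subseteq V_1$ with $d(\{v,v'\})=k$. For this I use double counting: $\sum_{\{v,v'\}} d(\{v,v'\}) = \sum_{w\in V_2}\binom{d(w)}{2}$, and by Proposition~\ref{elek} every $w\in V_2$ has degree $q-1$, while $|V_2|=q(q-1)/k$. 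Hence the left side equals $\frac{q(q-1)}{k}\binom{q-1}{2}$, and since each contributing pair contributes exactly $k$ to that sum, $P = \frac{1}{k}\cdot\frac{q(q-1)}{k}\binom{q-1}{2} = \frac{q(q-1)^2(q-2)}{2k^2}$. Therefore
$$K_{2,2}\!\left(\tfrac{q(q-1)}{k}+\tfrac{q(q-1)}{k},\,G^{(q,k)}\right)=P\binom{k}{2}=\frac{q(q-1)^2(q-2)}{2k^2}\cdot\frac{k(k-1)}{2}=\frac{q(q-1)^2(q-2)(k-1)}{4k},$$
which I would then rewrite as $\frac{q(q-1)^3}{4}(1-\frac1k)$ up to the lower-order discrepancy between $(q-1)^2(q-2)$ and $(q-1)^3$; the statement is asymptotic in spirit, but if an exact identity is claimed I would double-check whether the intended count uses $(q-2)$ rather than $(q-1)$, or whether some pairs with a repeated structure need separate treatment. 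The vertex-count $\frac{q(q-1)}{k}$ appearing as the first argument is just $|V_1|=|A\times B|=q\cdot\frac{q-1}{k}$, consistent with Proposition~\ref{elek}.

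I expect the first paragraph — pinning down that the co-degree is exactly $0$ or $k$ with no exceptions — to be where essentially all the work lies; the second paragraph is then a routine double-counting assembly. A secondary point worth stating carefully is that distinct vertices of $V_1$ really do cover the two sub-cases (determinant zero versus nonzero) exhaustively, and that in the zero-determinant case one cannot accidentally get a co-degree strictly between $1$ and $k-1$ or larger than $k$; handling that degenerate case is the subtle part of the obstacle I flagged above.
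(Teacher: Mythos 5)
Your first paragraph sets up exactly the same computation as the paper's proof (fix the pair, view the adjacency conditions as a $2\times2$ linear system in $\alpha$ and $t=g^{\beta}$, split on the determinant, and use the orbit of $(j,j')$ under the diagonal shift $i\mapsto(j+i,j'+i)$), and your double-counting assembly is a legitimate alternative to the paper's direct count of the co-degree-zero pairs. But the degenerate case you flagged and postponed is precisely where the argument breaks, and it cannot be patched to yield the stated dichotomy: when $b=b'$ (and $a\neq a'$, so the determinant $g^{b}(a-a')$ is nonzero), subtracting the two equations gives $t(a-a')=g^{j(q-1)/k}-g^{j'(q-1)/k}$, so every diagonal choice $j=j'$ forces $t=g^{\beta}=0$; the whole orbit of $(j,j)$ stays diagonal and contributes no admissible $\beta$ at all, while each of the other $k-1$ orbits contributes exactly one. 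Hence such pairs have co-degree exactly $k-1$, not $0$ or $k$. Concretely, for $q=5$, $k=2$, $g=2$, the vertices $(1,1)$ and $(0,1)$ of $V_1$ have the single common neighbour $(2,1)$. So the co-degree spectrum is $\{0,k-1,k\}$, with $k-1$ occurring exactly for pairs sharing the $B$-coordinate; the first claim of the proposition is false as literally stated (the paper's own Case 2B overlooks the same $t=0$ orbit).

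Your second paragraph inherits this: the identity $\sum_{\{v,v'\}}d(\{v,v'\})=\sum_{w\in V_2}\binom{d(w)}{2}=\frac{q(q-1)}{k}\binom{q-1}{2}$ is fine, but dividing by $k$ to get $P$ presupposes the false dichotomy, so your final count $\frac{q(q-1)^2(q-2)(k-1)}{4k}$ is not correct either. The numerical mismatch you noticed is real and is not an artifact of your method: the paper correctly finds $\frac{q-1}{k}-1$ co-degree-zero partners per vertex, but its assumption that all remaining pairs have co-degree $k$ contradicts your double count. The exact value is obtained by treating the same-$b$ pairs separately: the co-degree-$k$ pairs number $(q^{2}-q)\binom{(q-1)/k}{2}$ and each same-$b$ pair adds $\binom{k-1}{2}$, giving $K_{2,2}(n+n,G^{(q,k)})=\frac{q(q-1)^2(q-3)}{4}\left(1-\frac{1}{k}\right)$. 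Your expression, the paper's, and this one all agree to leading order $\frac{k-1}{4k}q^{4}$, which is all that the application in Theorem \ref{ennyi} uses; but as written your proposal does not prove the stated claims, and the fix is to handle the $b=b'$ pairs explicitly rather than hope the orbit argument has no degenerate exceptions.
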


\begin{proof}
We determine the co-degree of each pair $v= (a,b), v'=(a', b')$ from $V_1$.

\textbf{\textit{Case 1}. $a=a'=0$, $b\neq b'$.} In that case, we look for the possible solutions of the system of equations
\begin{align*}
  \begin{cases}
   \exists \ \ j\in \mathbb{Z}_k: \  g^{b}\alpha \ \ =   g^{j\cdot \frac{q-1}{k}}  \\
   \exists \  j'\in \mathbb{Z}_k: \  g^{b'}\alpha=  g^{j'\cdot \frac{q-1}{k}} 
  \end{cases}
\end{align*}

A solution $(\alpha,\beta)$  is called \textit{admissible}, if $\alpha\in A$ and $\beta\in B$ are both satisfied. 
Clearly $\alpha \neq 0$ must hold, but  that yields $ g^{b-b'}=  g^{(j-j')\cdot \frac{q-1}{k}}$, which is impossible, so the co-degree is $0$ in this case.\\

\textbf{\textit{Case 2.} $a\neq 0$.} We can rewrite the system of equations as

\begin{align*}
  \begin{cases}
   \exists \ \ j\in \mathbb{Z}_k: \  g^{\beta}a'+\frac{a'}{a}g^{b}\alpha \ \ = \frac{a'}{a}g^{j\cdot \frac{q-1}{k}}  \\
   \exists \  j'\in \mathbb{Z}_k: \  g^{\beta}a'+g^{b'}\alpha \ \ \ \ =  g^{j'\cdot \frac{q-1}{k}}.
  \end{cases}
\end{align*}

Subtracting the two equations we get  $$\left(\frac{a'}{a}g^{b}- g^{b'}\right)\alpha =\frac{a'}{a}g^{j\cdot \frac{q-1}{k}}-g^{j'\cdot \frac{q-1}{k}}.$$

\textbf{\textit{Case 2A.}} $\frac{a'}{a}g^{b}- g^{b'}=0$. This implies $\frac{a'}{a}= g^{j^*\cdot \frac{q-1}{k}}$ for some $j^*\in \mathbb{Z}_k$. Comparing this remark with the subcase condition, we get a contradiction, as $\frac{a'}{a} = g^{b'-b}$, however to obtain an admissible pair $b, b'$ we must have $\frac{a'}{a}=1$, but then $b'=b$ would also follow.

\textbf{\textit{Case 2B.}} $\frac{a'}{a}g^{b}- g^{b'}=\gamma \neq 0$. This allows as to express $\alpha$ as  \begin{equation} \label{eq:1} \alpha= \frac{1}{\gamma}\left( \frac{a'}{a}g^{j\cdot \frac{q-1}{k}}-g^{j'\cdot \frac{q-1}{k}} \right),\end{equation} and then we obtain $g^{\beta}$ as  \begin{equation} \label{eq:2} g^{\beta}=  (g^{j'\cdot \frac{q-1}{k}}-g^{b'}\alpha)/a'. \end{equation}

Clearly, $\alpha$ and thus $\beta$ depends on the choice of $j$ and $j'$, so we have to guarantee that these solutions are admissible. We claim that for any fix $j, j'$, exactly one choice from $\{j+i, j'+i\} : i\in \mathbb{Z}_k$ provides an admissible pair. Indeed, if $\alpha$ and $g^\beta$ is determined by $j$ and $j'$, then $j+i$ and $j'+i$ give 
$g^{i\cdot \frac{q-1}{k}}\alpha$ and $g^{i\cdot \frac{q-1}{k}}g^{\beta}$, hence this set gives exactly one admissible candidate for $\beta$. 

Observe on the other hand, that these admissible pairs provided by some $j$ and $j'$ are different. Indeed, a fixed $\alpha$ and $g^\beta$ determines exactly $j'$ (in the
equation \ref{eq:2}) and then together with $j'$, $j$ is also determined exactly (in the
equation \ref{eq:1}). Thus all in all, we get that there exists exactly $k$ common neighbors for $v= (a,b), v'=(a', b')$ in Case 2B.

This gives \begin{equation} \label{eq:r} \frac{1}{2}|V_1|\cdot \left(|V_1|-\frac{q-1}{k}\right)\binom{k}{2}=\frac{q(q-1)^3}{4}\left(1-\frac{1}{k}\right)
\end{equation}
 $K_{2,2}$ subgraphs, taking into consideration that for every $(a,b)$, exactly $\frac{q-1}{k}$ different other vertex from $V_1$ has the property that their pairwise co-degree with $(a,b)$ is zero according to Case 1 or Case 2A.
\end{proof}

\begin{remark}    
	
	It is easy to see that if the edge set of $G^{(q,k)}$ was defined by 
	\begin{equation}\label{edges2}
	\{ (a, b), (\alpha, \beta) \}\in E \ \Leftrightarrow \ \exists j\in \mathbb{Z}_k : g^{\beta}a+g^{b}\alpha=g^{\delta} g^{j\cdot \frac{q-1}{k}}  \mbox{ \ in \  $\mathbb{F}_q$}
	\end{equation}
	for some $\delta\in \mathbb{Z}$, then the proof of Proposition \ref{c4ek} would work out exactly the same way.  
\end{remark}

Comparing Proposition \ref{elek}, \ref{c4ek} to Theorem \ref{magnitude}, the proof of Theorem \ref{ennyi} easily follows.

\begin{proof}[Proof of Theorem \ref{ennyi}]
Choose a prime $p$ for which $p\equiv 1 \pmod {k}$. Then Construction \ref{mors} on $n+n= 2\frac{p(p-1)}{k}$ vertices  contains  $\frac{p(p-1)^2}{k}= \sqrt{k}\sqrt{\frac{p-1}{p}}n\sqrt{n}$ edges and $\frac{p(p-1)^3}{4}\left(1-\frac{1}{k}\right)= \frac{\sqrt{k}^2(\sqrt{k}+1)(\sqrt{k}-1)}{4}\frac{p-1}{p} n^2$. This shows that the construction provides asymptotically sharp result for every fix $k\in \mathbb{Z}$ if we take into consideration the density theorem of $1 \pmod k$ primes due to Huxley \cite{hux}.
\end{proof}

In Proposition \ref{c4ek} we prove that there exists exactly $k$ common neighbors for a pair $v, v' \in V_1$ which satisfies the condition of Case 2B, otherwise their co-degree is zero. This enables us to generalize the statement of Theorem \ref{ennyi} to provide asymptotic results on the number of $K_{2,t}$ graphs in a similar manner.

\begin{theorem}\label{k2t}
For any fixed positive integers $t>2$ and $k$, if $m=(\sqrt{k}+o(1))n\sqrt{n}$, then  
 $$\frac{K_{2,t}(n+n,m)}{n^2}\rightarrow \binom{k}{t}.$$
\end{theorem}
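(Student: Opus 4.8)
The plan is to combine the two lower-bound ingredients already in play with an explicit construction, just as in the proof of Theorem~\ref{ennyi}. First I would establish the matching lower bound $K_{2,t}(n+n,m)\geq \left(\binom{k}{t}-o(1)\right)n^2$. This should follow from the same Jensen/convexity machinery behind Theorem~\ref{theoLB}: writing $K_{2,t}(n+n,G)=\sum_{A\subseteq X,|A|=2}\binom{d(A)}{t}$, one applies the discrete Jensen inequality (Lemma~\ref{discJen}) twice, first to push the degrees $d(y)$ to be (almost) equal, then to push the pair-codegrees $d(A)$ to be (almost) equal, and substitutes $m=(\sqrt k+o(1))n\sqrt n$. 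Because $\binom{\overline d}{2}\big/\binom n2 \to k$ with this edge count, the averaged codegree tends to $k$, and $\binom n2\binom{k}{t}$ is the leading term. One must check that the $t>2$ version of the convexity estimates does not lose more than $o(n^2)$; this is routine since $\binom xt$ is convex in $x$ for $x\geq 0$ and the error terms are lower-order polynomials in $n$.

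Second I would supply the construction matching this bound, which is exactly $G^{(p,k)}$ from Construction~\ref{mors} with a prime $p\equiv 1\pmod k$. By Proposition~\ref{elek} the graph is $(p-1)$-regular on $n+n$ vertices with $n=\tfrac{p(p-1)}{k}$, and by Proposition~\ref{c4ek} (and the remark immediately following it, extending to general $\delta$) every pair of vertices in $V_1$ has codegree either $0$ or exactly $k$. Crucially, Proposition~\ref{c4ek} already counts \emph{how many} pairs have codegree $k$: for each $(a,b)\in V_1$ exactly $\tfrac{q-1}{k}$ other vertices of $V_1$ have codegree zero with it (Cases 1 and 2A), and all remaining pairs (Case 2B) have codegree exactly $k$. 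Therefore the number of $K_{2,t}$ copies is $\binom{k}{t}$ times the number of codegree-$k$ pairs, namely
$$
K_{2,t}(n+n,G^{(p,k)}) \;=\; \binom{k}{t}\cdot\frac{1}{2}|V_1|\left(|V_1|-\frac{p-1}{k}\right)
\;=\;\binom{k}{t}\cdot\frac{p(p-1)^3}{4k^2}\,,
$$
which, using $n=\tfrac{p(p-1)}{k}$, equals $\binom{k}{t}\cdot\tfrac{p-1}{p}\,n^2=\left(\binom{k}{t}-o(1)\right)n^2$.

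Third, to reach arbitrary $n$ (not only those of the form $\tfrac{p(p-1)}{k}$) I would invoke Huxley's density theorem on primes in the arithmetic progression $1\pmod k$, exactly as in the proof of Theorem~\ref{ennyi}: for $n$ large there is a prime $p\equiv1\pmod k$ with $\tfrac{p(p-1)}{k}=(1+o(1))n$, and one deletes a negligible set of vertices/edges (or pads) to land on exactly $n$ vertices and $m=(\sqrt k+o(1))n\sqrt n$ edges, changing the $C_4$- and $K_{2,t}$-counts by only $o(n^2)$. Combining the lower bound from step one with the upper bound witnessed by the construction in steps two and three gives $K_{2,t}(n+n,m)/n^2\to\binom{k}{t}$.

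I expect the only genuinely delicate point to be step one: verifying that the double application of the discrete Jensen inequality to $\sum_A\binom{d(A)}{t}$ really does yield $\binom{k}{t}n^2$ to leading order and not something smaller. The subtlety is that the inner averaging passes through $\sum_{A}d(A)=\sum_{y}\binom{d(y)}{2}$, so one controls a \emph{second} moment of the degrees, and then $\binom{\cdot}{t}$ is applied to the resulting averaged codegree; one has to be careful that the ``floor/ceiling'' corrections in Lemma~\ref{discJen} and the gap between $\binom{\overline d}{2}$ and the true average of $\binom{d(y)}{2}$ are both absorbed into the $o(n^2)$ term. Since $m/n=\sqrt n+O(1)$ here, $\binom{d(y)}{2}$ averages to $\tfrac12(\sqrt n)^2+O(\sqrt n)=\tfrac{n}{2}+O(\sqrt n)$ per vertex, the codegree average is $k+o(1)$, and everything else is lower order, so the estimate goes through; but this is the step that requires the most care to write honestly.
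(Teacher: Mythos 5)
Your strategy coincides with the paper's: the Jensen-type lower bound coming from Theorem \ref{theoLB}, the construction $G^{(q,k)}$ with a prime $p\equiv 1\pmod k$, and Huxley's theorem to reach general $n$. However, there is a concrete gap in how you handle the two orientations of $K_{2,t}$ when $t>2$. The identity $K_{2,t}(n+n,G)=\sum_{A\subseteq X,\,|A|=2}\binom{d(A)}{t}$ is false for $t>2$: the right-hand side counts only the copies whose $2$-vertex class lies in $X$, while the copies whose $2$-vertex class lies in $Y$ form a second, disjoint family. As a result, the Jensen computation you sketch yields the leading term $\binom{n}{2}\binom{k}{t}\approx\frac{1}{2}\binom{k}{t}n^2$, which is only half of the bound $\left(\binom{k}{t}-o(1)\right)n^2$ you claim to establish; the missing half must come from applying the same estimate to pairs of $Y$ and adding. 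This is precisely the factor $2$ the paper makes explicit in its proof, since Theorem \ref{theoLB} is stated for the fixed orientation $A\subseteq X$, $B\subseteq Y$, a distinction that is immaterial only when $t=2$. (A smaller slip: the quantity tending to $k$ is $n\binom{\overline{d}}{2}\big/\binom{n}{2}$, not $\binom{\overline{d}}{2}\big/\binom{n}{2}$.)

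The same orientation issue, compounded by an arithmetic error, appears in your count for the construction. With $|V_1|=\frac{p(p-1)}{k}$ one has $\frac{1}{2}|V_1|\bigl(|V_1|-\frac{p-1}{k}\bigr)=\frac{p(p-1)^3}{2k^2}$, not $\frac{p(p-1)^3}{4k^2}$, and this counts only the codegree-$k$ pairs inside $V_1$; by the symmetry of $G^{(p,k)}$ the pairs inside $V_2$ contribute equally, so the total is $K_{2,t}(n+n,G^{(p,k)})=\binom{k}{t}\frac{p(p-1)^3}{k^2}=\binom{k}{t}\frac{p-1}{p}n^2$. That is indeed the figure you assert at the end, but it is four times your displayed intermediate expression, so as written your lower bound ($\approx\frac{1}{2}\binom{k}{t}n^2$) and your construction count do not meet at the constant $\binom{k}{t}$ and the limit cannot be concluded. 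Once both orientations are included on both sides of the argument (and the case $k<t$ is noted to be trivial, as $\binom{k}{t}=0$ and $G^{(q,k)}$ contains no $K_{2,t}$), the proof closes and is essentially the paper's.
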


\begin{proof} From the theoretical lower bound Theorem \ref{theoLB} one can derive that either 
	${K_{2,t}(n+n,m)}\geq 0$ in the case $k<t$ or is at least $2\binom{k}{t}\binom{n}{2}$, asymptotically. 
	Note here that Theorem \ref{theoLB} counted the copies of the subgraphs with the prescription that $K_{2,t}$ is an ordered injection to $K_{n,n}$, which makes no difference in the case when $t=2$, in contrary to the general case. This explains the factor $2$ in the formula.\\
	
	If $k<t$, the graph $G^{(q,k)}$ clearly contains no copy of $K_{2,t}$. On the other hand, Proposition \ref{c4ek} about the codegrees of  $G^{(q,k)}$ implies that we can derive 
		$${K_{2,t}(n+n,G^{(q,k)})}= \frac{q(q-1)^3}{k^2}\binom{k}{t}$$ just as we got  equation \ref{eq:r}. Then the result follows.
\end{proof}

\section{Concluding remarks and related problems}\label{4.0}

Theorem \ref{magnitude} together with Theorem \ref{k2t} suggest that in fact the following stronger version of the conjecture of Erdős-Simonovits and Sidorenko might also hold.
\begin{conj} For a bipartite graph $F$, if $m/ex_{bi}(n+n, F))\rightarrow \infty$ holds for the number $m$ of edges in a balanced bipartite graphs, then the random graph with $m$ edges  has in expectation the asymptotically smallest number of copies of $F$.
\end{conj}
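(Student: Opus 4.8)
Fix a bipartite $F$ with a $2$-colouring into classes of sizes $v_{1},v_{2}$ (so $v(F)=v_{1}+v_{2}$), let $e=e(F)$, and let $a_{F}$ be the number of automorphisms of $F$ that preserve the colouring; for instance $a_{K_{a,b}}=a!\,b!$. Write $p=m/n^{2}$. Since the complete bipartite host imposes no edge constraint, $\hom(F,K_{n,n})=n^{v(F)}$, the number $\kappa(F,n)$ of copies of $F$ in $K_{n,n}$ is $(1+o(1))n^{v(F)}/a_{F}$, and the expected number of copies of $F$ in the random balanced bipartite graph with $m$ edges is $(1+o(1))\,n^{v(F)}p^{e}/a_{F}$. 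The statement to be proved is thus $F(n+n,m)=(1+o(1))\,n^{v(F)}p^{e}/a_{F}$. One inequality is free: $F(n+n,m)$ is at most the average of $F(n+n,G)$ over all $m$-edge $G\subseteq K_{n,n}$, which is $(1+o(1))\,n^{v(F)}p^{e}/a_{F}$. So the entire problem is the matching lower bound: every $m$-edge $G\subseteq K_{n,n}$ contains at least $(1-o(1))\,n^{v(F)}p^{e}/a_{F}$ copies of $F$.

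\textbf{Reducing the lower bound to Sidorenko.} I would split this into two steps. Step one is a sparse counting step: for any $m$-edge $G\subseteq K_{n,n}$ the number of copies of $F$ in $G$ equals $a_{F}^{-1}$ times the number of injective colour-respecting homomorphisms $F\to G$, and the latter equals $a_{F}^{-1}\big(\hom(F,G)-\Delta\big)$, where $\Delta$ counts the non-injective (degenerate) homomorphisms. Each degenerate homomorphism factors through a proper quotient of $F$ on at most $v(F)-1$ vertices, so $\Delta=O\!\big(n^{v(F)-1}p^{e^{\ast}}\big)$ for a suitable $e^{\ast}\le e$. Comparing with the main term $\asymp n^{v(F)}p^{e}$, the degenerate part is negligible precisely when $p$ clears a threshold $n^{-1/d(F)}$ governed by the subgraph densities of $F$; and the hypothesis $m/\mathrm{ex}_{bi}(n+n,F)\to\infty$ forces exactly this, since $\mathrm{ex}_{bi}(n+n,F)=\Omega\!\big(n^{2-1/d(F)}\big)$ (deletion method applied to a densest subgraph of $F$). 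After this reduction the number of copies of $F$ in $G$ is $(1-o(1))\,a_{F}^{-1}\hom(F,G)$, so it remains to prove $\hom(F,G)\ge(1-o(1))\,n^{v(F)}p^{e}=(1-o(1))\,p^{e}\hom(F,K_{n,n})$ — which is exactly the discrete Sidorenko--Erd\H{o}s--Simonovits inequality of Conjecture \ref{sid} with host $K_{n,n}$ of density $p$. Combining the two steps gives $F(n+n,G)\ge(1-o(1))\,n^{v(F)}p^{e}/a_{F}$, as required.

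\textbf{Unconditional cases, and the obstacle.} For the bipartite $F$ for which Sidorenko's conjecture is known, this scheme makes the statement a theorem. The cleanest case is $F=K_{a,b}$ with $a\le b$: iterating the K\H{o}v\'ari--S\'os--Tur\'an/Jensen estimate of Theorem \ref{theoLB}, and using that the average $a$-wise co-degrees tend to infinity (a consequence of $m=\omega(n^{2-1/a})$, which follows from $m=\omega(\mathrm{ex}_{bi}(n+n,K_{a,b}))$ and the unconditional bound $\mathrm{ex}_{bi}(n+n,K_{a,b})=O(n^{2-1/a})$), one gets $K_{a,b}(n+n,G)\ge(1-o(1))\frac{n^{a+b}}{a!\,b!}\,p^{ab}$, which is exactly the random-graph value; so the conjecture holds for all complete bipartite $F$ — the quadrilateral being Theorem \ref{magnitude}(iv), and $F=K_{2,t}$ following by combining Theorem \ref{k2t} with the $\xi(n)\gg n\sqrt n$ regime of Theorem \ref{magnitude}. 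Trees and even cycles can be handled the same way via the Sidorenko inequality known for them. The genuine obstacle is that in the dense range $m=\Theta(n^{2})$ the inequality $\hom(F,G)\ge(1-o(1))\,p^{e}\hom(F,K_{n,n})$ \emph{is} Sidorenko's conjecture, so the statement cannot be easier than that conjecture; and even for $\mathrm{ex}_{bi}(n+n,F)\ll m\ll n^{2}$ one still needs a \emph{worst-case} sparse counting lemma isolating the sharp leading constant — ordinary supersaturation (Corollary \ref{Erdo} and its analogues) only gives the order of magnitude — which would in turn require the exact exponent of $\mathrm{ex}_{bi}(n+n,F)$, itself unknown for most $F$ (for instance $C_{2k}$ with $k\notin\{2,3,5\}$). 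I would therefore expect to settle the conjecture unconditionally only for the Sidorenko-verified families, the general case remaining conditional on Sidorenko's conjecture.
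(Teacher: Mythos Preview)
This statement appears in the paper as an open \emph{conjecture} (Section~\ref{4.0}), not a theorem; the paper offers no proof, only the remark that Theorems~\ref{magnitude}(iv) and~\ref{k2t} verify it for $F=C_4$ and $F=K_{2,t}$. There is therefore no proof in the paper to compare your attempt against, and you are right not to claim one: your own conclusion---that the general case remains conditional on Sidorenko's conjecture---is exactly the status the paper assigns it.

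Your main structural observation is correct and sharper than anything the paper states explicitly: in the dense regime $m=\Theta(n^{2})$ the desired lower bound $\hom(F,G)\ge (1-o(1))\,p^{e(F)}\hom(F,K_{n,n})$ \emph{is} Conjecture~\ref{sid}, so the present conjecture is at least as hard as Sidorenko--Erd\H{o}s--Simonovits. The upper bound via averaging over all $m$-edge subgraphs is also a clean observation the paper does not spell out.

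There is, however, a genuine gap in your reduction in the other direction. You assert $\Delta=O\bigl(n^{v(F)-1}p^{e^{\ast}}\bigr)$ for the degenerate homomorphisms and then that $m\gg \mathrm{ex}_{bi}(n+n,F)$ pushes $p$ past the threshold ``$n^{-1/d(F)}$''. Neither step is justified: getting a factor $p^{e^{\ast}}$ into the bound on $\Delta$ would require \emph{upper} bounds of Sidorenko type on $\hom(F',G)$ for the quotients $F'$, and these fail for arbitrary $G$ (a single high-degree vertex already inflates $\hom(K_{1,2},G)$ far beyond $n^{3}p^{2}$). With only the trivial estimate $\Delta=O(n^{v(F)-1})$ one needs $m\gg n^{2-1/e(F)}$, which for $F=C_4$ asks for $m\gg n^{7/4}$, strictly more than $m\gg n^{3/2}\sim\mathrm{ex}_{bi}(n+n,C_4)$. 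So even granting Sidorenko for $F$, your scheme does not automatically cover the full range $m\gg\mathrm{ex}_{bi}(n+n,F)$; an additional argument is needed in the intermediate regime. For $F=K_{a,b}$ that extra input is precisely the Jensen computation of Theorem~\ref{theoLB}, which bypasses homomorphism counting altogether---and that, not a Sidorenko-plus-degenerates scheme, is how the paper actually handles the cases it settles.
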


Observe that this is even stronger than the following

\begin{conj}  (Erdős-Simonovits)\cite{cube, SMiki} For every bipartite graph $F$, there exist
constants $\beta\in (0, 1), c > 0$ and  $n_0$ such that graph $G$ on $n \geq n_0$ vertices
with $m \geq n^{2-\beta}$
edges contains at least $cn^{|V (F)|}p^{|E(H)|}$
copies of $F$ where
$p =\frac{m}{ \binom{n}{2}}$
is the edge density of $G$.
\end{conj}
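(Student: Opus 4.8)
We do not know how to settle this conjecture in general --- already for a single fixed bipartite $F$ it contains instances of the Sidorenko--Erdős--Simonovits conjecture --- so what follows is the line of attack one would pursue, together with the point where it currently gets stuck. The plan is to reduce to a bipartite host graph, then to locate a ``rich'' vertex set by \emph{dependent random choice}, and finally to embed $F$ greedily in a way that controls the \emph{number} of copies of $F$ and not merely the existence of one.

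First I would reduce to a bipartite host. Since $F$ is bipartite, $F(n,G)$ is bounded below by the number of copies of $F$ in any bipartite subgraph of $G$, and a max-cut of $G$ furnishes such a subgraph with at least $m/2$ edges; after the usual cleaning one may assume $G\subseteq K_{N,N}$ with $N=\Theta(n)$ and typical degree $\Theta(pN)$. Write $a\le b$ for the colour-class sizes of $F$. Next I would apply dependent random choice: sampling $\ell=\ell(F)$ random vertices of the $Y$-side and taking their common neighbourhood produces --- and this is exactly where the hypothesis $m\ge n^{2-\beta}$, i.e. $p$ bounded below by a fixed negative power of $n$, enters --- a set $U\subseteq X$ with $|U|\ge p^{c_1}N$ in which every $a$-subset has at least $p^{c_2}N$ common neighbours in $Y$, for constants $c_1,c_2$ depending only on $F$. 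Finally I would order $V(F)$ so that its $a$-class is placed inside $U$ and the remaining vertices are added one at a time, each with all of its already-placed neighbours on the opposite side, and embed $F$ greedily: when the $i$-th vertex is placed there are at least $p^{\,f_i}N-O(|V(F)|)$ admissible images, where $f_i$ counts its back-edges. Multiplying over all vertices gives $\Omega\!\big(N^{|V(F)|}p^{\sum_i f_i}\big)=\Omega\!\big(n^{|V(F)|}p^{|E(F)|}\big)$, \emph{provided} the powers of $p$ coming out of the dependent-random-choice step really telescope to $\sum_i f_i=|E(F)|$.

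That proviso is the hard part. Dependent random choice controls the common neighbourhood of a \emph{set} of already-placed vertices by a single power of $p$ that in general exceeds the relevant number of back-edges, so the crude bookkeeping only yields $\Omega(n^{|V(F)|}p^{|E(F)|+\varepsilon})$, or the correct exponent with a polylogarithmic loss. Removing this slack for an arbitrary bipartite $F$ is a weak local form of Sidorenko's conjecture for $F$; it is available for trees, even cycles, complete bipartite graphs (through Theorem~\ref{es}), cubes and other ``Sidorenko'' graphs, for which the embedding above becomes lossless, and in the upper part $m\ge n^{2-\beta}$ of the range one may alternatively transfer the dense counting lemma through sparse regularity. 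The two smallest genuinely bipartite cases, $F=C_4$ and $F=K_{2,t}$, are handled directly in this paper, where the finite-field constructions even identify the correct leading constant; the general statement remains open.
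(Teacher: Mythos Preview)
The statement you were asked to prove is labelled a \emph{conjecture} in the paper and is presented there without any proof; it is quoted from \cite{cube, SMiki} as an open problem, immediately after the authors' own (even stronger) Conjecture~5.1. So there is no ``paper's own proof'' to compare your attempt against.

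Your write-up is honest about this: you say at the outset that you do not know how to settle the conjecture, you lay out the natural dependent-random-choice strategy, and you identify precisely where the bookkeeping loses a factor of $p^{\varepsilon}$. That diagnosis is correct. Dependent random choice, in its standard form, produces a set $U$ in which every $a$-subset has many common neighbours, but the size of $U$ itself already costs a power of $p$ that does not correspond to any edges of $F$; hence the product over the greedy embedding steps overshoots $|E(F)|$ in the exponent. Eliminating that overshoot for arbitrary bipartite $F$ is, as you note, essentially a local form of Sidorenko's conjecture and is not known in general. Your remark that the argument \emph{does} go through for trees, even cycles, complete bipartite graphs and other known Sidorenko graphs is also accurate.

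One small correction: your final sentence suggests that the paper ``handles $F=C_4$ and $F=K_{2,t}$ directly'' in the sense of proving this conjecture for those $F$. That overstates matters. What the paper actually does is determine $C_4(n+n,m)$ and $K_{2,t}(n+n,m)$ asymptotically for $m$ of order $n^{3/2}$ (Theorems~\ref{ennyi} and~\ref{k2t}) and show that in the regime $m\gg n^{3/2}$ the random graph is asymptotically optimal (Theorem~\ref{magnitude}(iv)). These results are consistent with, and motivate, the conjecture, but the conjecture itself---for all bipartite $F$ and all $m\ge n^{2-\beta}$---is left open in the paper just as in your proposal.
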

Note that the expected number of copies of $H$ in a random graph $G(n, p)$
is roughly $\frac{1}{|Aut(F)|}n^{|V (H)|}p^{|E(H)|}$.
Therefore the conjecture above in some
sense asserts that random graphs contain (or, is close to containing) the
minimum number of copies of $F$.

Two obvious way to extend our results is to consider a sample graph $F$ from larger set of bipartite graphs, and to consider the case when the host graph is $K_n$ and not $K_{n,n}$. The latter case, which is strongly connected to the investigated bipartite case is considered to be slightly more involved in general. Nevertheless, in the case of $F= K_{2,t}$, the construction of Füredi \cite{fredi} provides a matching bound to the corresponding lower bound, in the case when the number of edges is of order $n\sqrt{n}$.

An interesting consequence of Theorem \ref{theoLB} is that while many extremal graph problems have stability versions, it is not quite the case for $F(n,m)$ in general. Indeed, several infinite families of block design parameters  are known, for which many non-isomorphic block designs exist.
This motivates the following problem.

\begin{problem} Bound the maximal edit distance between incidence graphs of non-isomorphic block designs with same parameters $2-(v,k,\lambda)$, in terms of their parameters.
\end{problem}

The celebrated result of Wilson (see in \cite{cont}) states that if $n$ is large enough, and some obvious divisibility conditions hold, then there exist block designs of parameters $2-(v,k,\lambda)$. 

However, $n$ should be much larger than $k^2$ in the theorem, so it cannot be applied in general to obtain incidence graphs of block designs (approximately) attaining the supersaturation number $C_4(n+n,m)$ with some prescribed $m$. To overcome this, one may aim for almost uniform hypergraphs  which cover every pair of vertices almost equal number of times.

 More exactly, it would be interesting to see how close can we get to a design structure with hypergraphs having hyperedge size mostly $k$, while the covering number of every pair is mostly $\lfloor n{\binom{k}{2}}^{-1}\rfloor $ or  $\left\lceil n{\binom{k}{2}}^{-1}\right\rceil$   in the most interesting case of Theorem \ref{magnitude}  when $k=\Theta(\sqrt{n})$.
  Packing and covering designs represents known particular cases. A good construction would follow from an optimal \textit{$2-(v,k,\lambda)$  packing or a $2-(v,k,\lambda)$ covering} in which every edge appears in at most or at least one block, respectively. (For details, we refer to \cite{cont, cov}). But so far, the corresponding well-known Schönheim and Johnson bound is not known to be  asymptotically tight in the region in view.

Similarly to the case of almost designs, it would also be interesting to investigate the existence of almost difference sets, or structures close to an almost difference set, as well. Consider the following construction.


\begin{construction}\label{csoport}
Let $\mathcal{G}$ be a finite Abelian group of order $n$, written additively. Take a subset $A\subset \mathcal{G}$ of size $k$. Consider the balanced bipartite graph $G(\mathcal{G}, A)$ on vertex classes $\{A+g | g\in \mathcal{G}\}$ and $\{g| g\in \mathcal{G}\}$, where edges correspond to inclusion.      
\end{construction}

Then for fixed $n$ and $k$, the closest $A$ is to be a difference set, the fewest copy of $C_4$ is determined by Construction \ref{csoport}.
More precisely, let $h_t(\mathcal{G}, A)$ be defined as 
$$h_t(\mathcal{G}, A)=\sum_{0\neq g\in \mathcal{G}} \left(|(a,a'): a-a'=g, a,a'\in A |\right)^t.$$

Then one can derive the following statement.

\begin{prop} If $n$ is odd, then $C_4(n+n, G(\mathcal{G}, A))=\frac{n}{4}\left(h_2(\mathcal{G}, A)-h_1(\mathcal{G}, A) \right)$
\end{prop}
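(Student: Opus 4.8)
The plan is to count quadrilaterals in $G(\mathcal{G}, A)$ directly via co-degrees, just as in Theorem \ref{theoLB} and the proof of Proposition \ref{c4ek}, and then translate the co-degree data into the difference-multiplicity function $h_t(\mathcal{G},A)$. First I would fix the vertex classes: the ``block side'' consists of the translates $\{A+g : g\in\mathcal{G}\}$ and the ``point side'' consists of the elements $\{g : g\in\mathcal{G}\}$, with $A+g$ adjacent to $h$ iff $h\in A+g$, i.e. iff $h-g\in A$. Since $n$ is odd, the map $g\mapsto 2g$ is a bijection on $\mathcal{G}$, so the translates $A+g$ are pairwise distinct (indeed $A+g = A+g'$ would force $A$ to be a union of cosets of a nontrivial subgroup only in special cases, but the cleaner point is simply that we may and do treat the $n$ translates as an $n$-element indexed family on the block side; distinctness is not actually needed for the count, only that we are counting $C_4$'s in the graph as defined). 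A $C_4$ is determined by an unordered pair of blocks $\{A+g, A+g'\}$ together with an unordered pair of common neighbours; hence
\[
C_4(n+n, G(\mathcal{G},A)) \;=\; \sum_{\{g,g'\}} \binom{d(\{A+g, A+g'\})}{2},
\]
the sum over unordered pairs of distinct elements of $\mathcal{G}$.

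The key computation is the co-degree $d(\{A+g, A+g'\})$: a point $h$ is a common neighbour iff $h-g\in A$ and $h-g'\in A$, i.e. iff $h = g+a = g'+a'$ for some $a,a'\in A$, which holds iff $g-g' = a'-a$ for some $a,a'\in A$. Writing $g-g' = u \neq 0$, the number of such $h$ equals the number of representations of $u$ as a difference $a'-a$ with $a,a'\in A$; call this $r(u)$. So $d(\{A+g, A+g'\})$ depends only on $u = g-g'$, and for each nonzero $u$ there are exactly $n$ ordered pairs $(g,g')$ with $g-g'=u$, hence $n/1$... more precisely, summing over unordered pairs and pairing $u$ with $-u$ (both give the same binomial since $r(u)=r(-u)$), the sum becomes $\tfrac{n}{2}\sum_{0\neq u}\binom{r(u)}{2}$, because there are $n$ ordered pairs with difference $u$, i.e. $n$ unordered pairs split across $\{u,-u\}$. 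Then I expand $\binom{r(u)}{2} = \tfrac12\big(r(u)^2 - r(u)\big)$ and recognize $\sum_{0\neq u} r(u)^2 = h_2(\mathcal{G},A)$ and $\sum_{0\neq u} r(u) = h_1(\mathcal{G},A)$ directly from the definition of $h_t$. This yields
\[
C_4(n+n, G(\mathcal{G},A)) \;=\; \frac{n}{2}\cdot\frac12\big(h_2(\mathcal{G},A) - h_1(\mathcal{G},A)\big) \;=\; \frac{n}{4}\big(h_2(\mathcal{G},A) - h_1(\mathcal{G},A)\big),
\]
as claimed.

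\textbf{Where the care is needed.} The main subtlety — not really an obstacle, but the place a sloppy argument would go wrong — is the bookkeeping of ordered versus unordered pairs on the block side and the role of oddness of $n$. I would make sure that: (a) the count of $C_4$'s is over \emph{unordered} pairs of distinct blocks and \emph{unordered} pairs of common points, so no factor-of-$2$ or factor-of-$4$ is lost or doubled; (b) for each nonzero $u$, the number of ordered pairs $(g,g')\in\mathcal{G}^2$ with $g-g'=u$ is exactly $n$, and these split into $n/2$... here oddness guarantees $u\neq -u$, so $\{u,-u\}$ is a genuine $2$-element set and the contributions of $u$ and $-u$ are the same binomial, letting me pass cleanly from the sum over unordered pairs of blocks to $\tfrac{n}{2}\sum_{0\neq u}\binom{r(u)}{2}$ without worrying about an element $u$ of order $2$ contributing a ``diagonal'' term with a different multiplicity. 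If $n$ were even one would get an extra correction term from the order-$2$ element, which is exactly why the hypothesis is there. With that accounting pinned down, the rest is the elementary algebraic identity expanding $\binom{r(u)}{2}$ and matching terms against the definition of $h_1$ and $h_2$.
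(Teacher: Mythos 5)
Your argument is correct and coincides with the paper's intended proof (the proposition is stated there without an explicit proof): the co-degree computation $d(\{A+g,A+g'\})=|A\cap(A+g-g')|=r(g-g')$ followed by $\sum_{\{g,g'\}}\binom{r(g-g')}{2}=\frac{n}{2}\sum_{0\neq u}\binom{r(u)}{2}=\frac{n}{4}\left(h_2(\mathcal{G},A)-h_1(\mathcal{G},A)\right)$ is exactly the calculation the paper has in mind, and your bookkeeping of ordered versus unordered pairs is sound. One small aside: your claim that oddness of $n$ makes the translates $A+g$ pairwise distinct is a non sequitur (take $A=\{0,3,6\}$ in $\mathbb{Z}_9$), but as you yourself note the count uses only the indexed bipartite graph and never the distinctness of the translates, so the proof is unaffected.
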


The condition on the parity of $n$ follows from the fact that we must distinguish between the cases when $g$ and $-g$ can actually coincide or not for some elements of 
$\mathcal{G}$, but similar formula can be derived in the even order case too.

Observe that $h_1(\mathcal{G}, A)$ is fix if we prescribe $n$ and $k$, thus actually minimizing \mbox{$C_4(n+n, G(\mathcal{G}, A))$} is equivalent to minimize 

$$\Psi_2(\mathcal{G}, A):=\sum_{0\neq g\in \mathcal{G}}\left(|(a,a'): a-a'=g, a,a'\in A |- \frac{k(k-1)}{n-1}\right)^2 $$
Note that $\frac{k(k-1)}{n-1}$ is simply the average occurrence of a difference, thus $\Psi_2(\mathcal{G}, A)=0$ if and only if $A$ is a difference set.

This proposition motivates the following problem.

\begin{problem} Prove a general upper bound on $\displaystyle \min_{A\subset \mathcal{G}, |A|=k} \Psi_2(\mathcal{G}, A)$.
\end{problem}



Putting into perspective, the supersaturation problem we studied  can be also seen as  problem where we search for the extreme number of graphs $C_4$ or $K_{2,t}$ in graphs without  odd cycles and given edge density, which fits in as  a particular case of a recently widely investigated  general framework.

\begin{problem} Determine the largest number of subgraphs $F$ in graphs of order $n$ not containing a forbidden family $\mathcal{F}$ of graphs.
\end{problem}

The most notable recent results in the direction concerns the case $F=K_3$ and  $\mathcal{F}=\{C_5\}$ \cite{Alon, Gyori}, and  $F=K_m$ and  $\mathcal{F}=\{K_{s,t}\}$ \cite{Alon}. It has also extensions in extremal hypergraph theory as well \cite{Jiang, Loh}.

\textbf{
Acknowledgement}

The author would like to thank Bence Csajbók and Francesco Pavese for their remarks concerning Theorem \ref{hyper}.


\begin{thebibliography}{00}
\small
\parskip -2mm

\bibitem{Alon} Alon, N.,  Shikhelman, C. (2016). Many T copies in H-free graphs. Journal of Combinatorial Theory, Series B, 121, 146--172.

\bibitem{ball}
Ball, S. (2015). Finite geometry and combinatorial applications (Vol. 82). Cambridge University Press.

\bibitem{cont} Beth, T., Jungnickel, D.,  Lenz, H. (1999). Design theory (Vol. 69). Cambridge University Press.


\bibitem{Gyori} Bollobás, B.,  Győri, E. (2008). Pentagons vs. triangles. Discrete Mathematics, 308(19), 4332--4336.

\bibitem{cov} Chee, Y. M., Colbourn, C. J., Ling, A. C.,  Wilson, R. M. (2013). Covering and packing for pairs. Journal of Combinatorial Theory, Series A, 120(7), 1440--1449.

\bibitem{Conlon} 
Conlon, D.,  Fox, J.,  Sudakov, B. An approximate version of Sidorenko’s conjecture. Geometric and Functional Analysis 20(6) (2010) 1354--1366.

\bibitem{Heger} Damásdi, G., Héger, T.,  Szőnyi, T. (2013). The Zarankiewicz problem, cages, and geometries. Annales Universitatis Scientiarum Budapestinensis de Rolando Eötvös Nominatae Sectio Mathematica, 56(1), 3--37.
\bibitem{diff}

Ding, C., Pott, A.,  Wang, Q. (2014). Constructions of almost difference sets from finite fields. Designs, codes and cryptography, 72(3), 581--592.



\bibitem{E62} Erdős, P.  (1962) On the number of complete subgraphs contained in certain graphs, Magyar Tud. Akad. Mat. Kutat\'o Int. K\"ozl 7(3)  459--464.   
 
\bibitem{ES83} Erdős, P.,  Simonovits, M. (1983). Supersaturated graphs and hypergraphs. Combinatorica, 3(2), 181--192.

\bibitem{cube}
Erdős, P.,   Simonovits, M. Cube-supersaturated graphs and related problems. Progress in graph theory (Waterloo, Ont., 1982) (1984): 203--218.

\bibitem{ES66}   Erd\H{o}s,  P., Simonovits, M. (1966). A limit theorem in graph theory, Studia Sci. Math. Hungar., 1, 51--57.

\bibitem{Ferber}  Ferber, A., Hod, R., Krivelevich, M.,  Sudakov, B. (2014). A construction of almost Steiner systems. Journal of Combinatorial Designs, 22(11), 488--494.

\bibitem{Fox} Fox, J.,  Wei, F. On the Local Approach to Sidorenko's Conjecture. Electronic Notes in Discrete Mathematics 61 (2017) 459--465.

\bibitem{fredi} Füredi, Z. (1996). New asymptotics for bipartite Turán numbers. Journal of Combinatorial Theory, Series A, 75(1), 141--144.

\bibitem{Fured} Füredi, Z. (1996). An upper bound on Zarankiewicz'problem. Combinatorics, Probability and Computing, 5(1), 29--33.

\bibitem{survey}  Füredi, Z.,  Simonovits, M. (2013). The history of degenerate (bipartite) extremal graph problems. In Erdős Centennial (pp. 169--264). Springer Berlin Heidelberg.




\bibitem{Hall} M. Hall Jr. (1947)  Cyclic projective planes, Duke Math. J., 14,  1079--1090.

\bibitem{hux} Huxley, M. N. (1971). On the difference between consecutive primes. Inventiones mathematicae, 15(2), 164--170.


\bibitem{Jiang} Jiang, T., Ma, J. (2016). Cycles of given lengths in hypergraphs. arXiv preprint arXiv:1609.08212.

\bibitem{Jung} Jungnickel  D., Vedder, K. (1984) On the geometry of planar difference sets,
European J. Combin., 5, 143--148.


\bibitem{Loh}
Loh, P. S., Tait, M., Timmons, C.,  Zhou, R. M. (2017). Induced Turán numbers. Combinatorics, Probability and Computing, 1--15.

\bibitem{LS75} Lovász, L.,  Simonovits, M. (1983). On the number of complete subgraphs of a graph II. Studies in pure mathematics, 459--495.


\bibitem{Sido2} Kim, J. H.,  Lee, C.,   Lee, J. (2016) Two approaches to Sidorenko’s conjecture. Transactions of the American Mathematical Society 368(7) 5057--5074.


\bibitem{Kiss}  Kiss, Gy., Malni\v{c}, A.; Maru\v{s}i\v{c}, D. A New Approach to Arcs. Acta Mathematica Hungarica, (1999), 84.3 181--188.


\bibitem{Szegedy} Li, J. X.,  Szegedy, B. On the logarithmic calculus and Sidorenko’s conjecture. To appear
in Combinatorica.

\bibitem{Mors} Mörs, M.,  A new result on the problem of Zarankiewicz. Journal of Combinatorial Theory,  A 31(2) (1981) 126--130.

\bibitem{Roman} Roman, S.,  A problem of Zarankiewicz
Journal of Combinatorial Theory, Series A, 1975 18(2), 187--198

\bibitem{Sido} Sidorenko, A.F. Inequalities for functionals generated by bipartite graphs, Diskret. Mat. 3
(1991), 50--65 (in Russian), Discrete Math. Appl. 2 (1992), 489--504 (in English)

\bibitem{SMiki} Simonovits, M. Extremal graph problems, degenerate extremal problems, and supersaturated
graphs, Progress in graph theory (Waterloo, Ont., 1982), Academic Press, Toronto, ON,
1984, pp. 419--437


 


\end{thebibliography}
\end{document}